\algrenewcommand\algorithmicrequire{\textbf{Precondition:}} 
\algrenewcommand\algorithmicensure{\textbf{Postcondition:}}
\algrenewcommand\alglinenumber[1]{\footnotesize #1} % To remove colon (:) from the line numbering in the algorithm
\algnewcommand{\IIf}[1]{\State\algorithmicif\ #1\ \algorithmicthen} %To include EndIf on the same line as If
\algnewcommand{\EndIIf}{\unskip\ \algorithmicend\ \algorithmicif}
\renewcommand{\quad}{$~~~\;\;\;$}
\newtheorem{theorem}{Theorem}
\newtheorem{proposition}{Proposition}
\newtheorem{lemma}{Lemma}
\newtheorem{corollary}{Corollary}
\newtheorem{example}{Example}
\newcommand{\diag}{\mbox{\rm diag}}
\newcommand{\tr}{\mbox{\rm trace}}
\newcommand{\transp}{{^{\rm T}}}
\newcommand{\vertiii}[1]{{\left\vert\kern-0.25ex\left\vert\kern-0.25ex\left\vert #1 
    \right\vert\kern-0.25ex\right\vert\kern-0.25ex\right\vert}}
\renewcommand{\R}{\mathbb{R}}
\renewcommand{\S}{\mathbb{S}}
\renewcommand{\L}{\mathbb{L}}
\newcommand{\matr}[1]{\begin{bmatrix} #1 \end{bmatrix}}    % matrix
\def\transp{^{\rm T}}
\newcommand{\Oh}{\mathcal O}
\newcommand{\ip}[2]{\left\langle #1 , #2 \right\rangle}    % inner product
\providecommand{\newoperator}[3]{%
  \newcommand*{#1}{\mathop{#2}#3}}
\newoperator{\argmax}{\mathsf{argmax}}{}
\newoperator{\argmin}{\mathsf{argmin}}{}
\newcommand{\1}{\mathbf{1}}
\newcommand{\dmin}{\displaystyle\min}
\newcommand{\dmax}{\displaystyle\max}
\newcommand{\0}{\mathbf{0}}
\author{Javier Pe\~na\thanks{Tepper School of Business,
Carnegie Mellon University, USA, {\tt jfp@andrew.cmu.edu}}
\and  Negar Soheili\thanks{College of Business Administration,  University of Illinois at Chicago, USA, {\tt nazad@uic.edu }}
}
\title{Solving Conic Systems via Projection and Rescaling}
\begin{document}
\maketitle
\begin{abstract}
We propose a simple {\em projection and rescaling algorithm} to solve the feasibility problem
\[
\text{ find } x \in L \cap \Omega,
\]
where $L$ and $\Omega$ are respectively a linear subspace and the interior of a symmetric cone in a finite-dimensional vector space $V$.

This projection and rescaling algorithm is inspired by previous work on rescaled versions of the perceptron algorithm and by Chubanov's projection-based method for linear feasibility problems.  As in these predecessors, each main iteration of our algorithm contains two steps: a {\em basic procedure} and a {\em rescaling} step.  When $L \cap \Omega \ne \emptyset$, the projection and rescaling algorithm finds a point $x \in L  \cap  \Omega$ in at most $\Oh(\log(1/\delta(L \cap \Omega)))$ iterations,  where $\delta(L \cap \Omega) \in (0,1]$ is a measure of the most interior point in $L \cap  \Omega$. The ideal value $\delta(L\cap \Omega) = 1$ is attained when $L \cap \Omega$ contains the center of the symmetric cone $\Omega$.

We describe several possible implementations for the basic procedure including a  perceptron scheme and a smooth perceptron scheme.  The perceptron scheme  requires $\Oh(r^4)$ perceptron updates and the smooth perceptron scheme requires $\Oh(r^2)$ smooth perceptron updates, where $r$ stands for the Jordan algebra rank of $V$.
\end{abstract}  

\section{Introduction}

We propose a simple algorithm based on projection and rescaling operations to solve the feasibility problem 
\begin{equation}\label{generalprob}
\text{ find } x\in L\cap \Omega,
\end{equation}
where $L$ and $\Omega$ are respectively a linear subspace and the interior of a symmetric cone in a finite-dimensional vector space $V$.  Problem~\eqref{generalprob} is fundamental in optimization as it encompasses a large class of feasibility problems. 
For example, for $A\in \R^{m\times n}$ and $b\in \R^m,$ the problem $Ax = b, x> 0$ can be formulated as \eqref{generalprob} by taking $L = \{(x,t)\in\R^{n+1}: Ax -t b = 0\}$ and $\Omega = \R^{n+1}_{++}$.  For $A\in \R^{m\times n}, \; c\in \R^n,$ the problem $A\transp y < c$ can be formulated as \eqref{generalprob} by taking 
$L = \{(s,t)\in\R^{n+1}: tc-s \in \text{span}(A)\}$ and $\Omega = \R^{n+1}_{++}$.
Likewise, the strict semi-definite feasibility problem $AX = b, X\in\S^n_{++}$ 
can be formulated as \eqref{generalprob} by taking $L = \{(X,t)\in \S^n\times \R: AX-tb=0\}$ and $\Omega = \S^n_{++}\times \R_{++}$. 
 The problem of finding an $\epsilon$-solution to a primal-dual pair of conic optimization problems satisfying the Slater condition can also be recast as a problem of the form~\eqref{generalprob} via a similar type of homogenization. 
 
To solve~\eqref{generalprob}, we consider the equivalent problem
\begin{equation}\label{projectionprob}
\text{ find $z\in V$ such that } P_L z \in\Omega,\end{equation}
where $P_L: V\rightarrow V$ denotes the orthogonal projection onto the subspace $L$. Observe that if $z$ is a solution to~\eqref{projectionprob}, then $x = P_Lz$ is a  solution to~\eqref{generalprob}.  Conversely, if $x$ is a solution to~\eqref{generalprob}, then $z=x$ is a solution to \eqref{projectionprob}.

Our {\em projection and rescaling algorithm} for \eqref{projectionprob} formalizes the following two intuitive ideas.  First, if the set $L\cap \Omega$ is {\em well-conditioned} in the sense that the subspace $L$ contains points well in the interior of $\Omega,$ then a {\em basic procedure,} which relies only on the projection mapping $P_L$, can easily find a point in $L\cap \Omega$. Second, when the basic procedure does not find a point in $L\cap \Omega$ after some amount of work, information about the problem instance can be inferred so that some type of {\em rescaling step} can be applied to obtain a better conditioned problem.  This two-step procedure eventually terminates with a feasible point in $L\cap \Omega$ provided this set is nonempty.

Our projection and rescaling algorithm is inspired by previous work on rescaled versions of the perceptron algorithm~\cite{BellFV09,DunaV06,PenaS13} as well as by Chubanov's work on a projection-based algorithm for linear feasibility problems~\cite{Chub15}.  In particular, 
the article~\cite{PenaS13} is concerned with a  feasibility problem of the form
\begin{equation}\label{alternative}
\text{ find }  y \in F,
\end{equation}
where $F \subseteq W$ is an open convex cone in a finite dimensional vector space $W$, and it is only assumed that a separation oracle for $F$ is available.  The gist of the approach in~\cite{PenaS13} is to enhance a simple relaxation-type algorithm for \eqref{alternative},  namely the perceptron method, with a periodic rescaling of the ambient space $W$.  When the set $F$ is {\em well-conditioned} in the sense that the volume of $F\cap \{y \in \R^m: \|y\|_2 =1\}$ exceeds a certain minimum threshold, the perceptron algorithm can easily find a point in $F$.  When that is not the case, the perceptron algorithm identifies a direction $d$ in the ambient space $W$ such that a dilation along $d$ increases the volume of $F\cap \{y \in \R^m: \|y\|_2 =1\}$ by a constant factor.  We note that the article~\cite{PenaS13} was preceded and inspired by the work of Dunagan and Vempala~\cite{DunaV06} and  Belloni, Freund, and Vempala~\cite{BellFV09}, who introduced random rescaling as a technique for enhancing the perceptron algorithm.

Our projection and rescaling algorithm can be seen as an extension of the recent work of Chubanov~\cite{Chub15} for the feasibility problem
\begin{equation}\label{lp.prob}
\text{ find } \; x > 0 \text{ such that } Ax = 0,
\end{equation}
where $A\in\R^{m\times n}.$  Observe that \eqref{lp.prob} is a special case of \eqref{generalprob} for $L = \ker(A)$ and $\Omega = \R^n_{++}.$  Chubanov~\cite{Chub15} relies on the equivalent problem
\begin{equation}\label{lp.projection}
\text{ find $z\in \R^n$ such that } P_L z > 0,
\end{equation}
where $P_L$ denotes the orthogonal projection onto $L = \ker(A)$.  Chubanov~\cite{Chub15} proposes an algorithm that combines a {\em basic procedure} (a relaxation-type algorithm) for \eqref{lp.projection} 
with a  periodic rescaling of the ambient space $\R^n$.  When the set $\{x>0: Ax = 0\}$ is {\em well-conditioned} in the sense that there exists a point in $\{x>0: Ax = 0, \|x\|_\infty =1\}$ whose coordinates are bounded away from zero (for example when $\{x: Ax = 0, \frac{\1}{2} \le x \le \1\}\ne\emptyset$)
the basic procedure  easily finds a solution to \eqref{lp.projection}. 
 When the basic procedure does not easily find a solution, it identifies a coordinate $i$ such that every point in $\{x>0: Ax = 0, \|x\|_\infty = 1\}$ satisfies $x_i < 1/2$.  Hence a dilation of the ambient space $\R^n$ along the $i$-th coordinate transforms the set $\{x>0: Ax = 0\}$ into a set that is better conditioned.   Chubanov shows that when $A$ has rational entries, the resulting algorithm either finds a solution to \eqref{lp.prob} or concludes that~\eqref{lp.prob} is infeasible in a total number of operations that is polynomial in the bit-length representation of $A$.  The article by Chubanov~\cite{Chub15} is  similar in spirit to his previous article~\cite{Chub12}.  Like~\cite{PenaS13} and its predecessors~\cite{BellFV09,DunaV06}, both \cite{Chub15} and \cite{Chub12}, as well as this paper, can be seen as enhancements of the classical relaxation method~\cite{Agmo54,MotzS54}.  Chubanov's work has also been revisited and extended by various sets of authors~\cite{BasuDLJ13,LiRT15,Roos15}. The numerical experiments reported in the articles by Roos~\cite{Roos15} and by Li, Roos, and Terlaky~\cite{LiRT15} provide promising evidence of the computational effectiveness of Chubanov's method~\cite{Chub15} and related variants.

In a similar fashion to the approaches in~\cite{Chub15} and in~\cite{PenaS13}, 
we propose an algorithm for~\eqref{projectionprob} that combines a simple basic procedure with a periodic rescaling of $V$.  The analysis of our approach relies on a suitable {\em condition measure} $\delta(L\cap \Omega)\in (0,1]$ associated to the {\em most interior} point in $L\cap \Omega$.  The ideal value $\delta(L\cap \Omega) = 1$ is attained when $L\cap \Omega$ contains the center of the cone  $\Omega.$
The main steps in our projection and rescaling algorithm can be sketched as follows.  When $\delta(L\cap \Omega)$ exceeds a certain threshold, a {\em basic procedure} easily finds a point $z \in L\cap \Omega$.  On the other hand, when that is not the case, the basic procedure identifies a linear automorphism $D:V\rightarrow V$ that leaves $\Omega$ unchanged and such that $\delta(D(L)\cap \Omega) > 1.5 \cdot \delta(L\cap \Omega).$  
The algorithm then continues with the  transformed problem 
\[
\text{ find } \; x \in D(L) \cap \Omega.
\]  
 As Theorem~\ref{main.thm} below formally shows, if $L\cap \Omega \ne \emptyset$ then the projection and rescaling algorithm finds a point in $L\cap \Omega$ after $\Oh(\log(1/\delta(L\cap \Omega))$ rounds of this combination of basic procedure and rescaling step.

We describe several \emph{elementary} implementations for the basic procedure including a perceptron scheme~\cite{BellFV09,Rosen}, a von Neumann scheme~\cite{EpelF00}, and variations of each of them, namely a von Neumann scheme with away~steps~\cite{PenaRS15}, and a smooth perceptron scheme~\cite{SohPena,SoheP13}. A common attractive feature of all of these schemes is their low computational work per iteration. We show that the first three schemes require $\Oh(r^4)$ simple updates and the smooth perceptron algorithm requires  $\Oh(r^2)$ simple updates, where $r$ is the Jordan algebra rank of $V$.   In the special case $\Omega = \R^n_{++},$ we have $r=n$ but the first three schemes require $\Oh(n^3)$ simple updates and the smooth perceptron scheme requires $\Oh(n^{3/2})$ simple updates.

It is worth noting that the problems \eqref{generalprob} and \eqref{alternative} are alternative systems when $F = \{y: A^*y \in \Omega^*\}$ for a linear mapping $A: V\rightarrow W$ with $L=\ker(A)$.  In this case the rescaling operation in~\cite{PenaS13} can be seen as a type of {\em left reconditioning} that transforms $A$ to  $D A$ for some isomorphism $D:W\rightarrow W$.  On the other hand, the rescaling operation in~\cite{Chub15} and its general version in this paper can be seen as a type of {\em right reconditioning} that transforms $A$ to  $A D$ for some isomorphism $D: V\rightarrow V$ that satisfies $D(\Omega) = \Omega$.  These kinds of left and right reconditioning operations are in the same spirit as the left and right preconditioners  operations introduced and discussed in~\cite{PenaRS14}.   

Observe that the reformulations~\eqref{projectionprob} and \eqref{lp.projection} are amenable to the algorithmic scheme developed in \cite{PenaS13} since they are of the form~\eqref{alternative}.  However, the algorithmic scheme in \cite{PenaS13} relies solely on separation and hence does not take advantage of the properties of the symmetric cone $\Omega$.  Not surprisingly, the algorithmic scheme in~\cite{PenaS13} applied to~\eqref{projectionprob} could be weaker than the one presented in this paper. In particular, the iteration bound for the {\em perceptron phase} in the algorithmic scheme in \cite{PenaS13} applied to \eqref{projectionprob}  depends on the dimension of the vector space $V$.   By contrast, the iteration bound for the {\em basic procedure} of the algorithm in this paper depends on the Jordan algebra rank of $V$ which is at most equal to the dimension of $V$ but could be quite a bit smaller.  For instance, the Jordan algebra rank of $\S^n$ is $n$ whereas its dimension is $n(n+1)/2$.  If $V$ is endowed with the Jordan algebra associated to the second-order cone, then its Jordan algebra rank is only 2 regardless of its dimension.

The main sections of the paper are organized as follows. In Section~\ref{sec.chubanov} we describe a Projection and Rescaling Algorithm that is nearly identical to that proposed by Chubanov~\cite{Chub15} for the special case of problem~\eqref{generalprob} when $V= \R^n$ and $\Omega = \R^n_{++}$, albeit presented in a slightly different format.  The main purpose of this section is to introduce the algorithmic scheme and main ideas that we subsequently generalize.  In Section~\ref{sec.SDP} we extend our Projection and Rescaling Algorithm to the case when $V$ is the space $\S^n$ of symmetric $n\times n$ real matrices and $\Omega$ is the cone $\S^n_{++}$ of positive definite matrices.  This is a special but particularly important case of the more general case when $V$ is a vector space endowed with an Euclidean Jordan algebra structure and $\Omega$ is the interior of the cone of squares in $V$, which is presented in Section~\ref{sec.symmetric}. In Section~\ref{sec.basic.proc} we describe different implementations for the basic procedure.  Finally in Section~\ref{sec.projection} we discuss how the projection matrix in~\eqref{projectionprob} can be updated after each rescaling operation.

\section{Projection and rescaling algorithm}
\label{sec.chubanov}
Assume $L \subseteq \R^n$ and consider the problem 
\begin{equation}\label{lp.problem}
\text{ find } x \in L \cap \R^n_{++}.
\end{equation}
Let $P_L:\R^n \rightarrow \R^n$ be the projection onto $L$.  Then \eqref{lp.problem} is equivalent to
\[
\text{ find } y \in \R^n \text{ such that } P_Ly \in \R^n_{++}.
\]
Consider the following kind of {\em condition measure} of the set $L \cap \R^n_{++}$:
\[
\delta(L \cap \R^n_{++}) := \dmax_x\left\{\prod_{i=1}^n x_i: x\in L \cap \R^n_{++}, \|x\|^2_2 = n\right\}. 
\]
Observe that $\delta(L\cap \R^n_{++}) >0 $ provided $L \cap \R^n_{++}\ne \emptyset.$ Furthermore, by the arithmetic-geometric inequality, $\delta(L\cap \R^n_{++}) \le 1$, and $\delta(L\cap \R^n_{++}) = 1$  precisely when $ e = \matr{1 & \cdots & 1}\transp \in L \cap \R^{n}_{++}.$  

For $v\in\R^n$ let $v^+$ denote $\max(v, 0)$ componentwise. Let $e_i\in \R^n$ denote the unitary vector whose $i$-th entry is equal to one and all others are equal to zero. The following key observation suggests a certain rescaling as a reconditioning operation.

\begin{proposition}\label{prop.chubanov}  Assume $z \in\R^n_{+}\setminus \{0\}$ is such that $\|(P_Lz)^+\|_2 \le \epsilon \|z\|_\infty$ for some $\epsilon \in (0,1).$ Let $D = I + a e_i e_i\transp$ where $i$ is such that $z_i = \dmax_{j=1,\dots,n} z_j$ and $a > 0$.  Then
\[
\delta(DL \cap \R^n_{++}) \ge \frac{1+a}{(1+(2a+a^2)\epsilon^2)^{n/2}}\cdot\delta(L \cap \R^n_{++}).
\]
In particular, if $\epsilon = \frac{1}{3\sqrt{n}}$ and $a=1$ then
\[
\delta(DL \cap \R^n_{++}) \ge 1.5\cdot\delta(L \cap \R^n_{++}).
\]\end{proposition}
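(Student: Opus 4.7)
The plan is to show that pushing the optimal point $x^*$ of the $\delta(L\cap\R^n_{++})$ program through $D$ yields a near-optimal point for the rescaled problem, and that the hypothesis forces $x^*_i$ (the $i$-th coordinate of this optimum) to be small, which controls how much the normalization hurts.

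First, I would pick $x^* \in L \cap \R^n_{++}$ with $\|x^*\|_2^2 = n$ and $\prod_j x^*_j = \delta(L\cap\R^n_{++})$. Then $Dx^* \in DL$ has $(Dx^*)_j = x^*_j$ for $j\ne i$ and $(Dx^*)_i = (1+a)x^*_i$, so $\prod_j (Dx^*)_j = (1+a)\prod_j x^*_j$ and $\|Dx^*\|_2^2 = n + (2a+a^2)(x^*_i)^2$. Rescaling $y := \sqrt{n}\, Dx^*/\|Dx^*\|_2 \in DL\cap\R^n_{++}$ gives $\|y\|_2^2 = n$ and
\[
\prod_{j=1}^n y_j \;=\; \left(\frac{n}{\|Dx^*\|_2^2}\right)^{n/2}(1+a)\prod_{j=1}^n x^*_j \;=\; \frac{1+a}{\bigl(1+(2a+a^2)(x^*_i)^2/n\bigr)^{n/2}}\,\delta(L\cap\R^n_{++}).
\]
So everything reduces to proving $(x^*_i)^2 \le n\epsilon^2$, i.e., $x^*_i \le \sqrt{n}\,\epsilon$.

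This is the only substantive step, and it chains three elementary inequalities. Since $x^* \in L$ and $P_L$ is self-adjoint, $\langle x^*, z\rangle = \langle P_L x^*, z\rangle = \langle x^*, P_L z\rangle$. Since $x^* > 0$ and $(P_L z)_j \le (P_Lz)^+_j$ coordinatewise, $\langle x^*, P_Lz\rangle \le \langle x^*, (P_Lz)^+\rangle \le \|x^*\|_2\,\|(P_Lz)^+\|_2 \le \sqrt{n}\cdot \epsilon\|z\|_\infty$ by Cauchy--Schwarz and the hypothesis. On the other hand, because $z \ge 0$ and $i$ is the argmax coordinate of $z$, $\langle x^*, z\rangle = \sum_j x^*_j z_j \ge x^*_i z_i = x^*_i\|z\|_\infty$. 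Combining, $x^*_i \|z\|_\infty \le \sqrt{n}\,\epsilon\|z\|_\infty$, and cancelling $\|z\|_\infty > 0$ (recall $z \ne 0$ and $z \ge 0$ so $\|z\|_\infty > 0$) gives $x^*_i \le \sqrt{n}\,\epsilon$, as desired.

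The specialization to $\epsilon = 1/(3\sqrt{n})$ and $a=1$ is then just arithmetic: $(2a+a^2)\epsilon^2 = 3/(9n) = 1/(3n)$, so $(1+1/(3n))^{n/2} \le e^{1/6}$, and the prefactor becomes $2/e^{1/6} > 1.5$. The main (mild) obstacle is setting up the correct inner-product identities and sign handling so that the hypothesis $\|(P_Lz)^+\|_2 \le \epsilon\|z\|_\infty$ --- which a priori only controls the positive part of a projection --- gets converted into a pointwise bound on a single coordinate of $x^*$; this works precisely because $i$ is selected via the largest coordinate of $z$ and $x^*$ is strictly positive.
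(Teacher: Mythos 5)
Your proof is correct and follows essentially the same route as the paper's: both reduce the claim to a pointwise bound on the $i$-th coordinate of a feasible $x$, obtained from the inner-product chain $x_i z_i \le \langle x, z\rangle = \langle x, P_L z\rangle \le \langle x, (P_L z)^+\rangle \le \|x\|_2\,\|(P_Lz)^+\|_2 \le \|x\|_2\,\epsilon\,\|z\|_\infty$, using self-adjointness of $P_L$, positivity of $x$, Cauchy--Schwarz, and the choice of $i$ as the argmax of $z$. The only cosmetic difference is that you keep the normalization $\|x^*\|_2^2 = n$ throughout (getting $x^*_i \le \sqrt{n}\,\epsilon$), whereas the paper rescales to $\|x\|_2 = 1$ and obtains $x_i \le \epsilon$; the resulting bound $(1 + (2a+a^2)\epsilon^2)^{n/2}$ in the denominator is the same. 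You also specialize to the maximizer $x^*$ rather than proving the inequality uniformly over feasible $x$ and then taking the supremum, which is equivalent since the maximum is attained.
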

\begin{proof}
Observe that for $x \in L\cap \R^n_{++}$ the point $\hat x:=\frac{\sqrt{n}}{\|Dx\|_2}Dx$ satisfies $\hat x \in DL \cap \R^n_{++}$ and $\|\hat x\|_2^2 = n$.
Thus it suffices to show that for $x \in L\cap \R^n_{++}$ with $\|x\|_2^2 = n$ both $\prod_{j=1}^n (Dx)_j = (1+a) \prod_{j=1}^n x_j$ and $\|Dx\|_2^2 \le n(1+(2a+a^2)\epsilon^2)$ as this would imply $\prod_{j=1}^n (\hat x)_j  \ge \frac{1+a}{(1+(2a+a^2)\epsilon^2)^{n/2}}\cdot  \prod_{j=1}^n x_j.$ Equivalently, it suffices to show that for $x\in L \cap \R^n_{++}$ with $\|x\|_2 = 1$ we have $\prod_{j=1}^n (Dx)_j \geq (1+a) \prod_{j=1}^n x_j $ and $\|Dx\|_2^2 \le 1+(2a+a^2)\epsilon^2$. 

Assume $x\in L \cap \R^n_{++}$ with $\|x\|_2 = 1$ is fixed. Since $Dx = (I+ae_ie_i\transp)x = x + ax_ie_i$, we have \[\prod_{j=1}^n (Dx)_j {\color{blue}{=}} (1+a) \prod_{j=1}^n x_j 
\]
Furthermore, since $x\in L \cap \R^n_{++}, \; \|x\|_2 = 1,$ and $z \ge 0$ it follows that \[0 < x_iz_i \le x\transp z = x\transp P_Lz \le \|\left(P_Lz\right)^+\|_2 \leq \epsilon z_i.\] Hence $x_i\le \epsilon$ and so $\|Dx\|_2^2 = \|x\|_2^2+(2a+a^2)x_i^2 \le 1 + (2a+a^2)\epsilon^2.$
\qed

\end{proof}

\bigskip

Proposition~\ref{prop.chubanov} suggests the Projection and Rescaling Algorithm described in Algorithm~\ref{alg:chubanov} below.  We note that Algorithm~\ref{alg:chubanov} is nearly identical to the algorithm proposed by Chubanov~\cite{Chub15}, albeit presented in a slightly different format.

\bigskip

\begin{algorithm}
  \caption{Projection and Rescaling Algorithm
    \label{alg:chubanov}}
  \begin{algorithmic}[1]
    \State  ({\bf Initialization})
  
    \Statex  Let $P_L \in \R^{n\times n}$ be the orthogonal projection onto $L$. 
    \Statex Let $D := I$ and $P := P_L.$ 

	\State ({\bf Basic Procedure})
	\Statex 
	Find  $z \gneqq 0$ such that either $Pz > 0$ or $\|(Pz)^+\|_2 \le \dfrac{1}{3\sqrt{n}} \|z\|_\infty$.
	\IIf{$Pz > 0$} HALT and \Return $x = D^{-1}Pz \in L \cap \R^n_{++}$ \EndIIf  
  	\State ({\bf Rescaling step}) 
	\Statex Pick $i$ such that $z_i = \|z\|_\infty$.
	\Statex Put $D:=(I+e_ie_i\transp)D$ and $P:=P_{DL}.$ 
	\Statex Go back to step 2. 
  \end{algorithmic}
\end{algorithm}

Theorem~\ref{thm.chubanov} states the main property of the above algorithm.  A major difference from the results in \cite{Chub15} is that Theorem~\ref{thm.chubanov} depends solely on $\delta(L\cap \R^n_{++})$.  In particular, $L$ can be any arbitrary linear subspace of $\R^n$.  It is not necessarily assumed to be the null space of a matrix with rational entries.  

\begin{theorem}\label{thm.chubanov} If $L \cap \R^n_{++} \ne \emptyset$ then Algorithm~\ref{alg:chubanov} finds $x\in L \cap \R^n_{++} $ in at most $\log_{1.5}(1/\delta(L\cap \R^n_{++}))$ main iterations. 
\end{theorem}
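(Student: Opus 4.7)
The plan is to track the evolution of the condition measure $\delta(D^{(k)}L \cap \R^n_{++})$ across the main iterations and use Proposition~\ref{prop.chubanov} to show it grows geometrically whenever the algorithm fails to halt.

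First I would set up notation: let $D^{(0)} := I$ and, at the end of the $k$-th rescaling step, $D^{(k)} := (I + e_{i_k}e_{i_k}\transp) D^{(k-1)}$, so that $P = P_{D^{(k)} L}$ at the start of the $(k+1)$-st call to the basic procedure. Two preliminary observations are then useful. First, $D^{(k)}$ is a product of matrices of the form $I + e_i e_i\transp$, hence is a diagonal matrix with positive (indeed, integer power-of-two) diagonal entries; consequently $D^{(k)}L \cap \R^n_{++} \ne \emptyset$ whenever $L\cap \R^n_{++}\ne \emptyset$, so $\delta(D^{(k)}L\cap \R^n_{++})>0$ throughout. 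Second, when the basic procedure at iteration $k+1$ returns $z$ with $Pz > 0$, the output $x = (D^{(k)})^{-1}Pz$ satisfies $x \in L$ (because $Pz \in D^{(k)}L$) and $x > 0$ (because $(D^{(k)})^{-1}$ is diagonal with positive entries), so the algorithm correctly returns a point in $L\cap \R^n_{++}$.

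Next, I would invoke Proposition~\ref{prop.chubanov} with $a = 1$ and $\epsilon = 1/(3\sqrt{n})$ applied to the current subspace $D^{(k)}L$ and the point $z$ produced by the basic procedure. The inequality $\|(Pz)^+\|_2 \le \tfrac{1}{3\sqrt{n}}\|z\|_\infty$ is precisely the hypothesis of the proposition, and the chosen rescaling matrix $I + e_{i_{k+1}}e_{i_{k+1}}\transp$ matches the one in the proposition; therefore
\[
\delta\!\left(D^{(k+1)}L \cap \R^n_{++}\right) \;\ge\; 1.5\cdot \delta\!\left(D^{(k)}L \cap \R^n_{++}\right).
\]
Iterating this bound gives $\delta(D^{(k)}L \cap \R^n_{++}) \ge 1.5^k\cdot \delta(L \cap \R^n_{++})$ for every $k$ up to and including the iteration in which the algorithm finally halts.

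Finally, as already noted after the definition of $\delta$, the arithmetic-geometric mean inequality yields $\delta(M\cap \R^n_{++})\le 1$ for any linear subspace $M$. Hence a non-halting main iteration can occur at most $\log_{1.5}(1/\delta(L\cap \R^n_{++}))$ times, after which the basic procedure must exit via the $Pz>0$ branch and return a feasible $x$. The only non-mechanical point in the argument is ensuring that the basic procedure is guaranteed to terminate with one of its two declared alternatives, but this is precisely the content of the schemes developed in Section~\ref{sec.basic.proc}, which I would simply cite.
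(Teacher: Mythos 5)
Your proposal is correct and follows the same argument as the paper: Proposition~\ref{prop.chubanov} guarantees that $\delta$ grows by a factor of at least $1.5$ at each rescaling step, while the AM--GM bound $\delta \le 1$ caps the number of such steps at $\log_{1.5}(1/\delta(L\cap\R^n_{++}))$. You have simply made explicit the bookkeeping (tracking $D^{(k)}$, verifying that $D^{(k)}L\cap\R^n_{++}\ne\emptyset$ and that the returned $x$ is valid) that the paper's one-line proof leaves implicit.
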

\begin{proof}  This is an immediate consequence of Proposition~\ref{prop.chubanov} and the fact that $\delta(\tilde L\cap \R^n_{++}) \le 1$ for any linear subspace $\tilde L \subseteq \R^n$ with $\tilde L \cap \R^n_{++} \ne \emptyset$.
\qed
\end{proof}

\medskip

To complement the statement of Theorem~\ref{thm.chubanov}, we next account for the number of arithmetic operations required by Algorithm~\ref{alg:chubanov}.  
A call to the basic procedure is the bulk of the computational work in each main iteration of  Algorithm~\ref{alg:chubanov}.  As we discuss in detail in Section~\ref{sec.basic.proc}, there are several possible implementations for the basic procedure.  The simplest implementations for the basic procedure terminate $\Oh(n^3)$ perceptron or von Neumann steps. Each of these steps requires a matrix-vector multiplication of the form $z\mapsto Pz$ in addition to some other negligible operations.  As we explain in Section~\ref{sec.projection} below, the projection matrix $P$ can be stored and updated in the form $P=QQ\transp$ for some matrix $Q\in \R^{n\times m}$ where $m=\dim(L)$ and the columns of $Q$ form an orthogonal basis of $DL$.  For a matrix of this form, each matrix-vector multiplication $z\mapsto Pz$ requires $\Oh(mn)$ arithmetic operations.  It thus follows that the total number of arithmetic operations required by Algorithm~\ref{alg:chubanov} is bounded above by $$\Oh(mn \cdot n^3\cdot \log(1/\delta(L\cap \R^n_{++}))) = \Oh(mn^4\log(1/\delta(L\cap \R^n_{++}))).$$

\medskip

Algorithm~\ref{alg:chubanov} is designed to find a solution to~\eqref{lp.problem} assuming that $L\cap\R^n_{++} \ne \emptyset$. If $L\cap\R^n_{++} = \emptyset$, Algorithm~\ref{alg:chubanov} will not terminate. However, Algorithm~\ref{alg:chubanov} has the 
straightforward extension described as Algorithm~\ref{alg:extended} that solves either~\eqref{lp.problem} or its strict alternative
$$ \text{find} \; \hat x \in L^{\perp}\cap\R^n_{++}$$
provided at least one of them is feasible.  An immediate consequence of Theorem~\ref{thm.chubanov} is that Algorithm~\ref{alg:extended} will find either $x\in L\cap \R^n_{++}$ or $\hat x \in L^{\perp}\cap\R^n_{++}$ in at most $\log_{1.5}(1/\max(\delta(L\cap \R^n_{++}),\delta(L^{\perp}\cap \R^n_{++})))$ main iterations provided $L\cap \R^n_{++}\ne \emptyset$ or $L^{\perp}\cap \R^n_{++}\ne \emptyset$.

\begin{algorithm}
  \caption{Extended Projection and Rescaling Algorithm
    \label{alg:extended}}
  \begin{algorithmic}[1]
    \State  ({\bf Initialization})
  
    \Statex  Let $P_L \in \R^{n\times n}$  be the orthogonal projection onto $L$. 
\Statex
Let $P_{L^\perp} \in \R^{n\times n}$     be the orthogonal projection onto      $L^{\perp}$. 
    \Statex Let $D := I$ and $P := P_L$. 
       \Statex Let $\hat D := I$ and $\hat P := P_{L^\perp}$.  

	\State ({\bf Basic Procedure})
	\Statex 
	Find  $z \gneqq 0$ such that either $Pz > 0$ or $\|(Pz)^+\|_2 \le \dfrac{1}{3\sqrt{n}} \|z\|_\infty$.
	\Statex
	Find  $\hat z \gneqq 0$ such that either $\hat P\hat z > 0$ or $\|(\hat P\hat z)^+\|_2 \le \dfrac{1}{3\sqrt{n}} \|\hat z\|_\infty$.

	\IIf{$Pz > 0$} HALT and \Return $x = D^{-1}Pz \in L \cap \R^n_{++}$ \EndIIf  
	\IIf{$\hat P\hat z > 0$} HALT and \Return $\hat x = \hat D^{-1}\hat P\hat z \in L^{\perp} \cap \R^n_{++}$ \EndIIf  

  	\State ({\bf Rescaling step}) 
	\Statex Pick $i$ such that $z_i = \|z\|_\infty$.
	\Statex Put $D:=(I+e_ie_i\transp)D$ and $P:=P_{DL}$. 
	\Statex Pick $j$ such that $\hat z_j = \|\hat z\|_\infty$.
	\Statex Put $\hat D:=(I+e_je_j\transp)\hat D$ and $P:=P_{\hat D L^{\perp}}$. 
	\Statex Go back to step 2.
  \end{algorithmic}
\end{algorithm}

\medskip

We conclude this section by noting that the stopping condition $\|(Pz)^+\|_2 \le \frac{1}{3\sqrt{n}} \|z\|_\infty$ in the basic procedure can be replaced by the less stringent condition 
$\|(Pz)^+\|_1 \le \frac{1}{2} \|z\|_\infty$.  This is closer to the approach used by Chubanov~\cite{Chub15}.   With this substitution it follows that  if $L \cap \R^n_{++} \ne \emptyset$ then the algorithm finds $x\in L \cap \R^n_{++} $ in at most $\log_{2}(1/\delta_{\infty}(L\cap \R^n_{++}))$ main iterations, where
\[
\delta_{\infty}(L \cap \R^n_{++}) := \dmax_x\left\{\prod_{i=1}^n x_i: x\in L \cap \R^n_{++}, \|x\|_{\infty} = 1\right\}. 
\]
We chose to state the above Projection and Rescaling Algorithm with the stopping condition  $\|(Pz)^+\|_2 \le \frac{1}{3\sqrt{n}} \|z\|_\infty$  and presented the above statements in terms of $\delta(L\cap \R^n_{++})$ because this approach has a more natural extension to symmetric cones.

\section{Extension to semidefinite conic systems}
\label{sec.SDP}
Let $\S^n$ denote the space of $n\times n$ real symmetric matrices. 
Assume $L\subseteq \S^n$ is a linear subspace and consider the problem 
\begin{equation}\label{sdp.problem}
\text{ find } X \in L \cap \S^n_{++},
\end{equation}
where $\S^n_{++}$ is the set of positive definite matrices, that is, the interior of the cone $\S^n_{+}$ of positive semidefinite matrices.

Assume the space $\S^n$ is endowed with the trace inner product 
\[
X\bullet S=\ip{X}{S} := \tr(XS).
\]
Let $P_L:\S^n \rightarrow \S^n$ be the projection onto $L$ with respect to the trace inner product.  Then \eqref{sdp.problem} is equivalent to
\[
\text{ find } Y \in \S^n \text{ such that } P_LY \in \S^n_{++}.
\]
For $X \in \S^n$ let $\lambda(X) \in \R^n$ denote the vector of eigenvalues of $X$.
We will rely on the Frobenius norm $\|X\|_F:=\sqrt{\ip{X}{X}} = \|\lambda(X)\|_2$ as well as on the operator norm $\|X\| := \dmax_{\|u\|_2 = 1} \|Xu\|_2 = \|\lambda(X)\|_\infty$.

Consider the following kind of {\em condition measure} of the set $L \cap \S^n_{++}$:
\[
\delta(L \cap \S^n_{++}) := \dmax_X\{\det(X): X\in L \cap \S^n_{++}, \|X\|_F^2 = n\}. 
\]  
In analogy to the case discussed in the previous section, $L\cap \S^n_{++}\ne \emptyset$ implies that $\delta(L \cap \S^n_{++}) \in (0,1]$ and $\delta(L \cap \S^n_{++}) = 1$ precisely when $I \in L \cap \S^n_{++}$.

For $X \in \S^n$ let $X^+$ denote the projection of $X$ on $\S^n_+$.  It is known, and easy to show, that if $X = Q\diag(\lambda(X))Q\transp$ is the spectral decomposition of $X$ then $X^+ = Q\diag(\lambda(X)^+)Q\transp$.

The key property stated as Proposition~\ref{prop.chubanov} above extends as follows.
\begin{proposition}  Assume $Z \succneqq 0$ is such that $\|(P_LZ)^+\|_F \le \epsilon \|Z\|.$  Let $u \in \R^n, \|u\|_2  =1$ be an eigenvector of $Z$ with eigenvalue $\lambda_{\max}(Z) = \|Z\|$. Let $D: \S^n \rightarrow \S^n$ be defined as
$$ D(X) = (I + a uu\transp)X(I + auu\transp)$$ 
for some constant $a>0$.   Then
\[
\delta(D(L) \cap \S^n_{++}) \ge \frac{(1+a)^2}{\left(1+(2a+a^2)\epsilon\right)^n} \cdot\delta(L \cap \S^n_{++}).
\]
In particular, if $\epsilon = \frac{1}{4n}$ and $a = \sqrt{2}-1$ then
\[
\delta(D(L) \cap \S^n_{++}) \ge 1.5 \cdot \delta(L \cap \S^n_{++}).
\]
\end{proposition}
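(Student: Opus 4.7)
The plan is to mirror the proof of Proposition~\ref{prop.chubanov} in the symmetric setting. For any $X \in L \cap \S^n_{++}$ with $\|X\|_F^2 = n$, set $\hat X := (\sqrt{n}/\|D(X)\|_F)\, D(X)$; then $\hat X \in D(L) \cap \S^n_{++}$ and $\|\hat X\|_F^2 = n$. Since $\det(\hat X) = (n^{n/2}/\|D(X)\|_F^n)\det(D(X))$, it suffices to establish, for every such $X$, the two bounds
$$\det(D(X)) = (1+a)^2 \det(X), \qquad \|D(X)\|_F^2 \le n\bigl(1+(2a+a^2)\epsilon\bigr)^2,$$
after which taking the maximum over $X$ delivers the claim.

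The determinant identity is an immediate application of the matrix determinant lemma: $\det(I+a uu\transp) = 1+a$, hence $\det(D(X)) = (1+a)^2\det(X)$. For the Frobenius bound, let $M := I + a uu\transp$, so $M^2 = I + (2a+a^2)uu\transp$, and by cyclicity of trace
$$\|D(X)\|_F^2 = \tr(M^2 X M^2 X) = \|X\|_F^2 + 2(2a+a^2)\, u\transp X^2 u + (2a+a^2)^2\, (u\transp X u)^2.$$
The two estimates I will need are $u\transp X u \le \sqrt{n}\,\epsilon$ and $u\transp X^2 u \le n\,\epsilon$. For the first, since $Z \succneqq 0$ with $u$ a top eigenvector, one has $Z \succeq \|Z\|\, uu\transp$, so
$$\|Z\|\, u\transp X u \le \ip{X}{Z} = \ip{X}{P_L Z} \le \ip{X}{(P_L Z)^+} \le \|X\|_F \cdot \|(P_L Z)^+\|_F \le \sqrt{n}\,\epsilon\, \|Z\|,$$
using in order the PSD domination, self-adjointness of $P_L$ together with $X \in L$, the decomposition $P_L Z = (P_L Z)^+ - (P_L Z)^-$ paired with $X,(P_L Z)^- \succeq 0$, Cauchy-Schwarz in the trace inner product, and the hypothesis on $Z$. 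The second estimate uses the PSD-ordering fact $X^2 \preceq \|X\|\, X$ (diagonalize to check), which gives $u\transp X^2 u \le \|X\|\, u\transp X u \le \sqrt{n}\cdot\sqrt{n}\,\epsilon$. Plugging both into the expansion yields $\|D(X)\|_F^2 \le n(1+(2a+a^2)\epsilon)^2$, since $1 + 2\alpha\epsilon + \alpha^2\epsilon^2 = (1+\alpha\epsilon)^2$ with $\alpha := 2a+a^2$.

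The specialization is a direct check: with $a = \sqrt{2}-1$ one has $(1+a)^2 = 2$ and $2a+a^2 = 1$, so the ratio becomes $2 / (1+\tfrac{1}{4n})^n \ge 2/e^{1/4} > 1.5$. The main obstacle I anticipate is isolating the two scalar estimates above: the bound on $u\transp X u$ is the natural symmetric-cone analog of $x_i \le \epsilon$ from the LP case (and requires threading through the PSD domination $Z \succeq \|Z\|\, uu\transp$), while the bound on $u\transp X^2 u$ has no LP counterpart and relies specifically on the operator-monotonicity relation $X^2 \preceq \|X\|\, X$ combined with $\|X\| \le \|X\|_F = \sqrt{n}$.
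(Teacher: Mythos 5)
Your proof is correct and takes essentially the same route as the paper: the determinant identity, the trace expansion of $\|D(X)\|_F^2$, the PSD domination $Z \succeq \|Z\|uu\transp$ to control $u\transp X u$, and the operator inequality $X^2 \preceq \|X\|X$ to control $u\transp X^2 u$. The only cosmetic difference is that the paper rescales once more to $\|X\|_F = 1$ (so that $X^2 \preceq X$ holds directly and $u\transp X u \le \epsilon$ without the $\sqrt{n}$ factor), whereas you carry the $\sqrt{n}$ scale through the estimates; the two versions are identical in substance.
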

\begin{proof}
Observe that for $X \in L\cap \S^n_{++}$ the point $\hat X:=\frac{\sqrt{n}}{\|D(X)\|_F}\cdot D(X)$ satisfies $\hat X \in D(L) \cap \S^n_{++}$ and $\|\hat X\|_F^2 = n$.
Thus it suffices to show that for $X \in L\cap \S^n_{++}$ with $\|X\|_F^2 = n$ both $\det(D(X)) = (1+a)^2 \det(X)$ and $\|D(X)\|_F^2 \le n(1+(2a+a^2)\epsilon)^2$ as this would imply $\det(\hat X)  \ge \frac{(1+a)^2}{(1+(2a+a^2)\epsilon^2)^{n}}\cdot  \det(X).$   Equivalently, it suffices to show that for $X \in L\cap \S^n_{++}$ with $\|X\|_F = 1$ both $\det(D(X)) = (1+a)^2 \det(X)$ and $\|D(X)\|_F^2 \le (1+(2a+a^2)\epsilon)^2.$

Assume $X \in L\cap \S^n_{++}$ with $\|X\|_F = 1$ is fixed.  Since $D(X) = (I + a uu\transp)X(I + auu\transp)$ and $\|u\|_2 = 1$, it readily follows that 
\[\det(D(X)) = \det(I+a uu\transp)^2\det(X) = (1+a)^2 \det(X).\]
The first step above holds because $\det(AB) = \det(A)\det(B)$ for all  $A,B\in\S^n$.  The second step holds because $\det(I+auu\transp) = 1+au\transp u = 1+ a$. 

On the other hand,
\begin{align}\label{frob.norm}
\|D(X)\|_F^2 &= \tr(D(X)^2)\notag \\  
&= \tr(X(I+auu\transp)^2X(I+ auu\transp)^2)\\
&= \tr(X^2+ 2(2a+a^2)X^2uu\transp + (2a+a^2)^2(u\transp Xu)Xuu\transp)\notag\\ &= \tr(X^2) + 2(2a+a^2)\tr(X^2 uu\transp )+ (2a+a^2)^2(u\transp X u)\tr(Xu u\transp)\notag\\ &=\tr(X^2) + 2(2a+a^2)\tr(u\transp X^2 u)+ (2a+a^2)^2(u\transp X u)\tr(u\transp Xu)\notag\\&=\tr(X^2) + 
2(2a+a^2)u\transp X^2 u + (2a+a^2)^2 (u\transp X u)^2.\notag
\end{align}
The steps above hold because  $\tr(AB)= \tr(BA), \; \tr(A+B) = \tr(A)+\tr(B),$ and $\tr(cA) = c\cdot\tr(A)$ for all $A,B\in\S^n$ and $c\in \R$.

Now observe that by construction $ uu\transp \preceq \dfrac{Z}{\|Z\|}$.  Thus using that $X\in L\cap \S^n_{++}$ and $\|X\|_F = 1$ we get \[
u\transp X u = X\bullet uu\transp \le  \frac{X\bullet Z}{\|Z\|} = \frac{P_LX\bullet Z}{\|Z\|}  = \frac{X\bullet P_LZ}{\|Z\|} \le  \frac{\|X\|_F \|(P_LZ)^+\|_F}{\|Z\|} \le \epsilon.
\] 
Furthermore, since $X \in \S^n_{++}$ and $\|X\|_F = 1$, it follows that $X-X^2 \in \S^n_{++}$ and so $u\transp X^2 u \le u\transp X u \le \epsilon.$  Hence \eqref{frob.norm} yields
\[\|D(X)\|_F^2 \le 1 + 2(2a+a^2)\epsilon + (2a+a^2)^2\epsilon^2 = (1+(2a+a^2)\epsilon)^2.\]
\qed
\end{proof}

The Rescaling and Projection Algorithm from Section~\ref{sec.chubanov}, namely Algorithm~\ref{alg:chubanov}, extends to Algorithm~\ref{alg:sdp}.

\medskip

\begin{algorithm}
  \caption{Projection and Rescaling Algorithm for Semidefinite Conic Systems
    \label{alg:sdp}}
  \begin{algorithmic}[1]
    \State  ({\bf Initialization})
    \Statex Let $P_L: \S^n \rightarrow \S^n$ be the orthogonal projection onto $L$. 
    \Statex Let $D:\S^n \rightarrow \S^n$ be the identity map, $P := P_L,$ and $a:=\sqrt{2}-1$.
	\State ({\bf Basic Procedure})
	\Statex 
	Find $Z\succneqq 0 $ such that either  $P(Z) \succ 0$ or $\|\left(P(Z)\right)^+\|_F\le \frac{1}{4n}\|Z\|$.
	\IIf{$P(Z) \succ 0 $} HALT and \Return $X = D^{-1}(P(Z)) \in L \cap \S^n_{++}$ \EndIIf 
  	\State ({\bf Rescaling step}) 
	\Statex Pick $u\in\R^n, \|u\|_2 = 1$ such that $Zu = \lambda_{\max}(Z) u$.
	\Statex Replace $D:\S^n \rightarrow \S^n$ with the mapping $X \mapsto (I+auu\transp)D(X)(I+auu\transp)$. 
	\Statex Let $P:=P_{D(L)}.$	
	\Statex Go back to step 2. 	
  \end{algorithmic}
\end{algorithm}

Theorem~\ref{thm.chubanov} and its proof readily extends as follows.

\begin{theorem} If $L \cap \S^n_{++} \ne \emptyset$ then Algorithm~\ref{alg:sdp} finds $X\in L \cap \S^n_{++} $ in at most $\log_{1.5}(1/\delta(L\cap \S^n_{++}))$ main iterations.
\end{theorem}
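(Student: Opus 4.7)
The plan is to mimic the proof of Theorem~\ref{thm.chubanov} verbatim, using the preceding proposition as the engine driving each main iteration. Two ingredients suffice: every non-terminating main iteration multiplies the condition measure by a factor of at least $1.5$ (this is exactly what the preceding proposition guarantees, since the algorithm selects $a=\sqrt{2}-1$ and the basic procedure's stopping tolerance is $\epsilon=\tfrac{1}{4n}$), and the condition measure is uniformly bounded above by $1$ on any subspace whose intersection with $\S^n_{++}$ is nonempty.

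First I would verify the uniform upper bound $\delta(\tilde L\cap \S^n_{++})\le 1$ for every linear subspace $\tilde L\subseteq \S^n$ with $\tilde L\cap \S^n_{++}\ne\emptyset$. This reduces to showing that any $X\in \S^n_{++}$ with $\|X\|_F^2=n$ satisfies $\det(X)\le 1$. Since $\|X\|_F^2 = \sum_{i=1}^n \lambda_i(X)^2 = n$ and $\det(X)=\prod_i \lambda_i(X)>0$, the arithmetic-geometric mean inequality applied to the squared eigenvalues gives
\[
\prod_{i=1}^n \lambda_i(X)^2 \le \Bigl(\tfrac{1}{n}\textstyle\sum_i \lambda_i(X)^2\Bigr)^n = 1,
\]
hence $\det(X)\le 1$. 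I would then note that the rescaling map $X\mapsto (I+auu\transp)X(I+auu\transp)$ is an automorphism of $\S^n_{++}$ (as $I+auu\transp\succ 0$ is invertible), so throughout execution the current subspace still intersects $\S^n_{++}$ and the bound $\delta\le 1$ remains in force after each rescaling.

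To conclude, let $\delta_0 := \delta(L\cap \S^n_{++})$ and let $\delta_k$ denote the condition measure of the current subspace after $k$ main iterations in which the algorithm did not halt. Iterating the preceding proposition yields $\delta_k \ge 1.5^k \,\delta_0$, while $\delta_k \le 1$ by the previous paragraph. Rearranging gives $k \le \log_{1.5}(1/\delta_0)$, so the algorithm must halt within at most $\log_{1.5}(1/\delta(L\cap \S^n_{++}))$ main iterations. There is no substantive obstacle beyond the upper bound $\delta\le 1$; the entire argument is a direct transcription of the proof of Theorem~\ref{thm.chubanov}, with the AM--GM step on eigenvalues playing the role of the AM--GM step on coordinates.
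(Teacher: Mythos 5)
Your proof is correct and is essentially the paper's argument: the paper simply declares that Theorem~\ref{thm.chubanov} and its proof extend verbatim, and your write-out of the two ingredients (the factor-$1.5$ gain per non-halting iteration from the preceding proposition, and the uniform bound $\delta\le 1$ via AM--GM on the eigenvalues) is exactly that extension. No discrepancies.
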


\medskip

As it was the case in Algorithm~\ref{alg:chubanov}, the bulk of the work in each main iteration of Algorithm~\ref{alg:sdp} is a call to the basic procedure. As we detail in Section~\ref{sec.basic.proc}, the  simplest implementations of the basic procedure are guaranteed to terminate in $\Oh(n^4)$ perceptron or von Neumann steps.  Each of these steps requires an operation of the form $Z \mapsto P(Z)$ in addition to a leading eigenvalue computation for a matrix in $\S^n$ and some other negligible computations.  Assuming that $P$ is maintained via an orthogonal basis for $D(L)$ each operation $Z \mapsto P(Z)$ requires $\Oh(mn^2)$ arithmetic operations where $m=\dim(L)$.  The operation $Z \mapsto P(Z)$ dominates the leading eigenvalue computation.  
Indeed, there are several methods from the numerical linear algebra literature (see, e.g.,~\cite{Parl80}) that compute the leading eigenvalue and eigenvector of an $n\times n$ symmetric matrix in $\Oh(n^2)$ arithmetic operations. It thus follows that the total number of arithmetic operations required by Algorithm~\ref{alg:sdp} is bounded above by 
$$\Oh(mn^2\cdot n^4\cdot \log(1/\delta(L\cap \S^n_{++}))) = \Oh(mn^6\log(1/\delta(L\cap \S^n_{++}))).$$

\medskip

Algorithm~\ref{alg:sdp}  extends in the same fashion as Algorithm~\ref{alg:chubanov} extends to Algorithm~\ref{alg:extended} to find a point in either $
L \cap \S^n_{++}$ or $L^{\perp} \cap \S^n_{++}$ provided one of them is feasible.

\section{Extension to symmetric conic systems}
\label{sec.symmetric}
Consider the problem 
\begin{equation}\label{sym.problem}
\text{ find } x \in L \cap \Omega, 
\end{equation}
where $L\subseteq V$ and $\Omega \subseteq V$ are respectively a linear subspace and  the interior of a symmetric cone in a finite-dimensional vector space $V$ over $\R$. 

We next present a version of the Projection and Rescaling Algorithm for the more general problem \eqref{sym.problem}.  To that end, we rely on some  machinery of Euclidean Jordan Algebras.  For succinctness we  recall only the essential facts and pieces of notation that are necessary for our exposition.  We refer the reader to the articles~\cite{SchmA01,SchmA03} and the textbooks~\cite{Baes09,FaraK94} for a more detailed discussion of Euclidean Jordan algebras and their connection to optimization.  The key connection between symmetric cones and Euclidean Jordan algebras is given by a theorem of Koecher and Vinberg that establishes a correspondence between symmetric cones and cones of squares of Euclidean Jordan algebras~\cite[Chapter III]{FaraK94}.

Assume $V$ is endowed with a bilinear operation $\circ : V\times V \rightarrow V$ and $e\in V$ is a particular element of $V$.  The triple $(V,\circ,e)$ is an {\em Euclidean Jordan algebra with identity element} if  the following conditions hold:
\begin{itemize}
\item $x\circ y = y \circ x$ for all $x,y\in V$
\item $x\circ(x^2 \circ y) = x^2\circ(x\circ y)$ for all $x,y\in V$, where $x^2 = x\circ x$
\item $x\circ e = x$ for all $x \in V$
\item There exists an associative positive definite bilinear form on $V$.
\end{itemize}
Example~\ref{the.example} below summarizes the most popular types of Euclidean Jordan algebras used in optimization.

An element $c\in V$ is {\em idempotent} if $c^2  = c$. An idempotent element of $V$ is a {\em primitive idempotent} if it is not the sum of two other idempotents.
The rank $r$ of $V$ is the smallest integer such that for all $x\in V$ the set $\{e,x,x^2,\dots,x^r\}$ is linearly dependent.  Every element $x\in V$ has a {\em spectral decomposition}
\begin{equation}\label{spectral}
x = \sum_{i=1}^r \lambda_i(x) c_i,
\end{equation}
where $\lambda_i(x) \in \R,\; i=1,\dots,r$ are the {\em eigenvalues} of $x$ and  $\{c_1,\dots,c_r\}$ is a {\em Jordan frame,} that is, a collection of non-zero primitive idempotents such that $c_i \circ c_j = 0$ for $i\ne j$, and $c_1+\cdots+c_r = e$.  We will rely on the following simple observation: Given the spectral decomposition \eqref{spectral}, we have $x\circ c_i = \lambda_i(x) c_i, \; i=1,\dots,r$.

The {\em trace} and {\em determinant} of $x \in V$ are respectively defined as
$
\tr(x) = \sum_{i=1}^r\lambda_i(x) 
$ and $\det(x) = \prod_{i=1}^r\lambda_i(x).$
Throughout this section we  assume that $(V,\circ,e)$ is an Euclidean Jordan algebra with identity.  Furthermore, we assume that $V$ is endowed with the following trace inner product:
\begin{equation}\label{inner.Jordan}
\ip{x}{y}:= \tr(x\circ y).
\end{equation}
We also assume that $\Omega$ is the interior of the cone of squares in $V,$ that is, $\Omega = \text{int}(\{x^2: x\in V\})$.  

\begin{example}\label{the.example} The following are the most popular Euclidean Jordan algebras used in optimization.  

\begin{enumerate}
\item[(a)] The space $\S^n$ of $n\times n$ real symmetric matrices with the bilinear operation
\[
X\circ Y:=\frac{XY+YX}{2}
\]
is an Euclidean Jordan algebra of rank $n$ and identity element $I$.  In this case, the spectral decomposition, trace, and determinant are precisely the usual ones.  The cone of squares is the cone of positive semidefinite matrices $\S^n_+$.
 \item[(b)]  The space $\R^n$ with the bilinear operation
 \[
 x\circ y = \matr{x_1y_1\\ \vdots \\ x_n y_n}
 \]
is an Euclidean Jordan algebra of rank n and identity element $e = \matr{1\\ \vdots \\ 1}$. In this case, the spectral decomposition of an element $x \in \R^n$ is 
\[
x = \sum_{i=1}^n x_i e_i.
\]
For $x\in \R^n$ we have $\tr(x) = \sum_{i=1}^n x_i$ and $\det(x) = \prod_{i=1}^n x_i$. The cone of squares is the non-negative orthant $\R^n_+$.
 \item[(c)]  
The space $\R^{n}$ with the bilinear operation
\[
\matr{x_0 \\ \bar x} \circ \matr{ y_0 \\ \bar y} := \matr{x\transp y\\ x_0 \bar y + y_0 \bar x}
\]
is an Euclidean Jordan algebra of rank 2 and identity element $e = \matr{1\\ \0}$.  In this case, the spectral decomposition of an element $x = \matr{x_0 \\ \bar x}\in \R^{n}$ is 
\[
x = (x_0 + \|\bar x\|_2) \matr{1/2 \\ \bar u/2} + (x_0 - \|\bar x\|_2) \matr{1/2 \\ -\bar u/2},
\]
where $ \bar u\in \R^{n-1}$ is such that $\|\bar u\|_2 = 1$ and  $\bar x = \|\bar x\|_2 \bar u$.  Consequently, for $x\in V$ we have $\tr(x) = 2x_0$ and $\det(x) = x_0^2 - \|\bar x\|^2$.
The cone of squares is the second order cone $\mathbb L_n = \left\{x = \matr{x_0 \\ \bar x}\in \R^{n}: x_0 \ge \|\bar x\|_2\right\}.$
 \item[(d)]  A direct product of finitely many of the above types of Euclidean Jordan algebras is again an Euclidean Jordan algebra.  
 \end{enumerate}
 
\end{example}

Let $P_L:V \rightarrow V$ be the projection map onto $L$ relative to the inner product defined in \eqref{inner.Jordan}.  Then \eqref{sym.problem} is equivalent to
\[
\text{ find } y \in V \text{ such that } P_Ly \in \Omega.
\]
We will rely on the Frobenius norm $\|x\|_F:=\sqrt{\ip{x}{x}} = \|\lambda(x)\|_2$, as well as on the operator norm $\|x\| := \|\lambda(x)\|_\infty$, 
where $\lambda(x)\in \R^r$ denote the vector of eigenvalues of $x\in V$. 

Consider the following kind of {\em condition measure} of the set $L \cap \Omega$:
\[
\delta(L \cap \Omega) := \dmax_x\left\{\det(x): x\in L \cap \Omega, \|x\|_F^2 = r\right\}. 
\]
Observe that this condition measure matches the ones defined in Section~\ref{sec.chubanov} and Section~\ref{sec.SDP} for the special cases $\Omega = \R^n_{++}$ and $\Omega = \S^n_{++}.$  As in those special cases, observe that $L\cap \Omega \ne \emptyset$ implies $\delta(L\cap \Omega) \in (0,1]$ with equality precisely when $e \in L\cap \Omega.$

Let $\bar \Omega$ denote the closure of $\Omega$.  For $v \in V$ let $v^+$ denote the projection of $v$ on $\bar \Omega$.  It is easy to see that if $v = \sum_{i=1}^r \lambda_i(v) c_i$ is the spectral decomposition of $v$, then $v^+ = \sum_{i=1}^r \lambda_i(v)^+ c_i$, where $\lambda_i(v)^+ = \max\{\lambda_i(v),0\}, \; i=1,\dots,r$.

Assume $c\in V$ is a primitive idempotent and $a > 0$ is a positive constant.  The following mapping associated to $c$ is key to our development.  Let $D_v :V\rightarrow V$ be the {\em quadratic mapping} associated to $v = e + a c$, that is,
\begin{equation}\label{Qx.eqn}
D_vx = 2v\circ(v\circ x) - v^2 \circ x.
\end{equation}
The following identities readily follow from the properties of the Jordan algebra product
\[
v \circ x = (e+ac)\circ x = x + a c\circ x
\]
\[
v \circ(v \circ x) = (e+ac)\circ (x + a c\circ x) = x + 2a c \circ x + a^2 c \circ (c\circ x)
\]
\[
v^2 \circ x =  (e + 2ac + a^2 c) \circ x =  x + (2a+a^2) c\circ x.
\]
Hence the quadratic mapping associated to $v = e + a c$ defined in \eqref{Qx.eqn} can also be written as
\begin{equation}\label{Qx.eqn.2}
D_vx = x+(2a-a^2)c\circ x + 2a^2 c\circ(c\circ x).
\end{equation}

\begin{proposition}\label{prop.socp}  Assume $z \in \bar\Omega\setminus \{0\}$ is such that $\|\left(P_Lz\right)^+\|_F \le \epsilon \|z\|.$ Let $c\in V$ be a primitive idempotent  such that $z\circ c = \lambda_{\max}(z) c$ and let $D_v:V\rightarrow V$ be the quadratic mapping associated to $v = e + a c$ as in \eqref{Qx.eqn} for some constant $a > 0$.  Then
\[
\delta(D_v(L) \cap \Omega) \ge \frac{(1+a)^2}{(1 + (2a + a^2)\epsilon)^r}\cdot \delta(L\cap \Omega).
\]
In particular, if  $\epsilon = \frac{1}{4r}$ and $a =\sqrt{2}-1$ then 
\[
\delta(D_v(L) \cap \Omega) \ge 1.5 \cdot \delta(L \cap \Omega).
\]
\end{proposition}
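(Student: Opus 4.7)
The plan is to follow the same template as the two preceding propositions: reduce the bound on $\delta$ to two pointwise statements for each $x \in L\cap \Omega$ with $\|x\|_F=1$, namely (i) $\det(D_v x) = (1+a)^2 \det(x)$, and (ii) $\|D_v x\|_F \le 1+(2a+a^2)\epsilon$. Given (i) and (ii), the renormalization $\hat x := \sqrt{r}\, D_v x/\|D_v x\|_F$ (which lies in $D_v(L)\cap\Omega$ with $\|\hat x\|_F^2=r$) together with the homogeneity $\det(\hat x) = (\sqrt{r}/\|D_v x\|_F)^r \det(D_v x)$ immediately delivers $\delta(D_v(L)\cap\Omega) \ge \tfrac{(1+a)^2}{(1+(2a+a^2)\epsilon)^r}\cdot \delta(L\cap\Omega)$, exactly as in Section~\ref{sec.SDP}.

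For (i), I would invoke the standard quadratic-representation identity $\det(D_v x) = \det(v)^2 \det(x)$ valid in any Euclidean Jordan algebra, and observe that extending $c$ to a Jordan frame $\{c,c_2,\dots,c_r\}$ yields $v = (1+a)c + c_2 + \cdots + c_r$, so $\det(v)=1+a$.

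The substance is (ii). Let $x = x_0+x_{1/2}+x_1$ be the Peirce decomposition of $x$ relative to the primitive idempotent $c$, so that $c\circ x_i = i\cdot x_i$ and the $x_i$ are mutually orthogonal in the trace inner product. Substituting into \eqref{Qx.eqn.2} one obtains
\[ D_v x = x_0 + (1+a)x_{1/2} + (1+a)^2 x_1, \]
whence
\[ \|D_v x\|_F^2 = 1 + (2a+a^2)\|x_{1/2}\|_F^2 + ((1+a)^4-1)\|x_1\|_F^2. \]
It remains to bound the two Peirce components. Writing $x_1=\gamma c$ (since $V_1(c)=\R c$), I would first chain the inequalities
\[ \gamma = \ip{x}{c} \le \frac{\ip{x}{z}}{\|z\|} = \frac{\ip{x}{P_Lz}}{\|z\|} \le \frac{\ip{x}{(P_Lz)^+}}{\|z\|} \le \frac{\|x\|_F\,\|(P_Lz)^+\|_F}{\|z\|} \le \epsilon, \]
using respectively $z/\|z\|-c \in \bar\Omega$ (because $c$ is the primitive idempotent attached to $\lambda_{\max}(z)$), $x\in L$ combined with the self-adjointness of $P_L$, $(P_Lz)^+ - P_Lz \in\bar\Omega$, and Cauchy--Schwarz. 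For $\|x_{1/2}\|_F^2$ I would use the sharper bound $\ip{x^2}{c}\le \ip{x}{c}\le\epsilon$ (coming from $x-x^2\in\bar\Omega$, which in turn follows from $\sum_i\lambda_i(x)^2=\|x\|_F^2=1$ and $x\in\bar\Omega$) together with the trace-associativity identity $\ip{x_{1/2}^2}{c}=\ip{x_{1/2}}{x_{1/2}\circ c}=\|x_{1/2}\|_F^2/2$ to conclude $\gamma^2 + \|x_{1/2}\|_F^2/2 \le \epsilon$. Plugging $\|x_1\|_F^2=\gamma^2\le\epsilon^2$ and $\|x_{1/2}\|_F^2 \le 2(\epsilon-\gamma^2)$ into the expression for $\|D_v x\|_F^2$ and simplifying via $(1+a)^4-1 = 2(2a+a^2)+(2a+a^2)^2$ produces precisely $1+2(2a+a^2)\epsilon + (2a+a^2)^2\epsilon^2 = (1+(2a+a^2)\epsilon)^2$, as required.

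The main obstacle is likely the clean computation of $\ip{x^2}{c}$ via the Peirce multiplication rules (particularly $V_{1/2}\circ V_{1/2}\subseteq V_0\oplus V_1$) and the trace-associativity $\ip{a\circ b}{c}=\ip{a}{b\circ c}$; everything else is a direct transcription of the semidefinite argument, with the quadratic representation $D_v$ playing the role of the congruence $X\mapsto(I+auu\transp)X(I+auu\transp)$.
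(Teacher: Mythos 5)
Your proof is correct, and it takes a genuinely different route from the paper's. The paper expands $\|D_vx\|_F^2$ via \eqref{Qx.eqn.2} directly into trace terms and claims the identity $\|D_vx\|_F^2 = \|x\|_F^2 + 2(2a+a^2)\tr(c\circ x^2) + (2a+a^2)^2\tr((c\circ x)^2)$, then bounds $\tr((c\circ x)^2)\le(\tr(c\circ x))^2\le\epsilon^2$ by asserting $c\circ x\in\Omega$. You instead use the Peirce decomposition $x=x_0+x_{1/2}+\gamma c$ to obtain the exact identity $\|D_vx\|_F^2 = 1 + (2a+a^2)\|x_{1/2}\|_F^2 + ((1+a)^4-1)\gamma^2$, then close via the two bounds $\gamma\le\epsilon$ and $\gamma^2+\tfrac12\|x_{1/2}\|_F^2=\ip{c}{x^2}\le\epsilon$.

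Your route is not merely cosmetically different: it is also more careful. The paper's assertion that $c\circ x\in\Omega$ whenever $c,x\in\bar\Omega$ is false in Euclidean Jordan algebras --- the Jordan product of cone elements need not lie in the cone; e.g.\ in $\S^2$ with $c=e_1e_1\transp$ and $X=\bigl(\begin{smallmatrix}1&0.9\\0.9&1\end{smallmatrix}\bigr)$ the matrix $c\circ X$ is indefinite. Consequently the inequality $\tr((c\circ x)^2)\le(\tr(c\circ x))^2$ cannot be justified that way; indeed in Peirce terms $\tr((c\circ x)^2)=\gamma^2+\tfrac14\|x_{1/2}\|_F^2$ while $(\tr(c\circ x))^2=\gamma^2$, so the claimed inequality fails whenever $x_{1/2}\ne 0$. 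Relatedly, the claimed equality \eqref{Qx.norm} in the paper is an overestimate (the true $\|D_vx\|_F^2$ is smaller by $\tfrac14(2a+a^2)^2\|x_{1/2}\|_F^2$). Your Peirce computation bypasses both issues: by recognizing that $\|x_{1/2}\|_F^2+2\gamma^2=2\ip{c}{x^2}\le 2\epsilon$ and that $(1+a)^4-1=2(2a+a^2)+(2a+a^2)^2$, the substitutions produce $(1+(2a+a^2)\epsilon)^2$ exactly as needed. So your proposal not only reproves the proposition but actually supplies the rigorous argument that the paper's proof was reaching for; you should present the chain $\|D_vx\|_F^2 = 1+(2a+a^2)(\|x_{1/2}\|_F^2+2\gamma^2)+(2a+a^2)^2\gamma^2 \le 1+2(2a+a^2)\epsilon+(2a+a^2)^2\epsilon^2$ explicitly so the order of substitution of the two bounds is unambiguous.
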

\begin{proof}
Observe that for $x \in L\cap \Omega$ the point $\hat x:=\frac{\sqrt{r}}{\|D_vx\|_F}\cdot D_vx$ satisfies $\hat x \in D_v(L) \cap \Omega$ and $\|\hat x\|_F^2 = r$.
Thus it suffices to show that for $x \in L\cap \Omega$ with $\|x\|_F^2 = r$ both $\det(D_vx) = (1+a)^2 \det(x)$ and $\|D_vx\|_F^2 \le r(1 + (2a + a^2)\epsilon)^2$
 as   this would imply $\det(\hat x) \ge \frac{(1+a)^2}{(1 + (2a + a^2)\epsilon)^r} \det(x).$  
Equivalently, it suffices to show that for $x \in L\cap \Omega$ with $\|x\|_F = 1$ both $\det(D_vx) \ge (1+a)^2 \det(x)$ and $\|D_vx\|_F \le 1 + (2a + a^2)\epsilon$. 
Assume $x \in L\cap \Omega$ with $\|x\|_F = 1$ is fixed. Since $D_v$ is the quadratic form associated to $v = e + a c$, it follows from~\cite[Prop III.4.2]{FaraK94} or from~\cite[Prop 2.5.4]{Baes09} that
\[
\det(D_vx) = \det(v)^2 \det(x) = (1+a)^2 \det(x).
\]
On the other hand, the expression \eqref{Qx.eqn.2} for $D_vx$ yields
\begin{equation}
\label{Qx.norm}
\|D_vx\|_F^2 = \|x\|_F^2 + 2(2a+a^2) \tr(c \circ x^2) + (2a+a^2)^2 \tr((c\circ x)^2).
\end{equation}
Next observe that $\frac{z}{\|z\|} - c \in \bar \Omega$.  Thus using that $x\in L\cap \Omega$ and $\|x\|_F = 1$ we get
\[
\ip{x}{c} \le \frac{1}{\|z\|}\ip{x}{z} =\frac{1}{\|z\|} \ip{P_Lx}{z} = \frac{1}{\|z\|} \ip{x}{P_Lz}
\le \frac{1}{\|z\|} \|x\|_F\|(P_Lz)^+\|_F \le  \epsilon.
\] 
Since $c,x\in \Omega$ we have $c\circ x \in \Omega$.  In particular, $\tr((c\circ x)^2) \le (\tr(c\circ x))^2 \le \epsilon^2$.  Furthermore, since $x\in \Omega$ and $\|x\|_F = 1$ we also have $x - x^2 \in \Omega$.   In particular $\tr(c\circ x^2) \le \tr(c \circ x) \le \epsilon$.
 Therefore \eqref{Qx.norm} yields
\begin{align*}
\|D_vx\|_F \le 1 + (2a + a^2) \epsilon.
\end{align*}
\qed
\end{proof}

We have the following more generic version of the Projection and Rescaling Algorithm presented in Algorithm~\ref{alg:symmetric}.

\bigskip

\begin{algorithm}
  \caption{Projection and Rescaling Algorithm for Symmetric Conic Systems
    \label{alg:symmetric}}
  \begin{algorithmic}[1]
    \State  ({\bf Initialization})
    \Statex Let $P_L: V \rightarrow V$ be the orthogonal projection onto $L$. 
    \Statex Let $D: V \rightarrow V$ be the identity map, $P := P_L,$ and $a:=\sqrt{2}-1$.    
	\State ({\bf Basic Procedure})
	\Statex  Find $z\in \bar\Omega\setminus \{0\}$ such that either $Pz \in \Omega$ or $\|(Pz)^+\|_F \le \frac{1}{4r}\|z\|$.	
	\IIf{$Pz \in \Omega$} HALT and \Return $x = D^{-1}Pz \in L \cap \Omega$ \EndIIf 
  	\State ({\bf Rescaling step}) 
	\Statex Pick $c\in V$ a primitive idempotent  point such that $z\circ c = \lambda_{\max}(z) c$.
	\Statex Let $D_v:V\rightarrow V$ be the quadratic mapping associated to $v = e + ac$.
	\Statex Replace $D$ with $D_vD$ and $P$ with $P_{D L}$.	
	\Statex Go back to step 2. 	
  \end{algorithmic} 
\end{algorithm}

\begin{theorem}\label{main.thm} If $L \cap \Omega \ne \emptyset$ then Algorithm~\ref{alg:symmetric} finds $x\in L \cap \Omega $ in at most $\log_{1.5}(1/\delta(L\cap \Omega))$ main iterations.
\end{theorem}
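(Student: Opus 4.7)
The plan is to mirror the proof of Theorem~\ref{thm.chubanov} and reduce the statement to an immediate combination of Proposition~\ref{prop.socp} with the uniform upper bound $\delta(\tilde L\cap \Omega)\le 1$.

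First, I would verify that the rescaling step keeps the feasibility of the transformed system intact. The quadratic mapping $D_v$ associated to $v = e + ac$ with $a>0$ and a primitive idempotent $c$ satisfies $v \in \Omega$, and it is a standard fact for Euclidean Jordan algebras (see e.g.~\cite[Prop III.2.2 and III.4.2]{FaraK94}) that quadratic representations $D_v$ with $v \in \Omega$ are linear automorphisms of $\Omega$. Consequently, after any sequence of rescalings the current $D$ is an automorphism of $\Omega$, and $D(L)\cap \Omega \ne \emptyset$ if and only if $L \cap \Omega \ne \emptyset$. In particular the hypothesis $L\cap\Omega\ne\emptyset$ is preserved, so $\delta(D(L)\cap\Omega)$ stays well defined and strictly positive throughout.

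Next I would show that $\delta(\tilde L \cap \Omega) \le 1$ for every subspace $\tilde L$ with $\tilde L \cap \Omega \ne \emptyset$. This follows directly from the arithmetic-geometric mean inequality applied to the eigenvalues of $x$: for $x\in \Omega$ with $\|x\|_F^2=r$, one has $\sum_i \lambda_i(x)^2 = r$, hence $\prod_i \lambda_i(x)^2 \le 1$ by AM-GM, so $\det(x)=\prod_i \lambda_i(x)\le 1$. Taking the maximum over the feasible set yields $\delta(\tilde L \cap \Omega)\le 1$.

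With these two facts at hand, the theorem is immediate. At each main iteration that does not halt, the basic procedure produces $z\in \bar\Omega\setminus\{0\}$ with $\|(Pz)^+\|_F \le \frac{1}{4r}\|z\|$, so Proposition~\ref{prop.socp} applied with $\epsilon = \frac{1}{4r}$ and $a=\sqrt2-1$ guarantees $\delta(D_v(D(L))\cap\Omega) \ge 1.5\cdot \delta(D(L)\cap\Omega)$. Therefore after $k$ rescaling steps the current condition measure satisfies $\delta(D(L)\cap\Omega) \ge 1.5^k \cdot \delta(L\cap \Omega)$. Combined with the upper bound $\delta(D(L)\cap\Omega)\le 1$, this forces $k\le \log_{1.5}(1/\delta(L\cap\Omega))$, so the algorithm must reach the halting condition $Pz\in \Omega$ (and return $x=D^{-1}Pz\in L\cap\Omega$ by the automorphism property of $D$) within that many main iterations.

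The only mildly delicate point is the justification that $D_v$ is an automorphism of $\Omega$ and that $\det$ transforms by a factor $(1+a)^2$, but both are classical consequences of the theory of quadratic representations in Euclidean Jordan algebras and are already invoked inside Proposition~\ref{prop.socp}; no new work is required here.
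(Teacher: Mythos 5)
Your proof is correct and follows essentially the same route the paper intends: combine Proposition~\ref{prop.socp} (with $\epsilon = \frac{1}{4r}$, $a=\sqrt{2}-1$, giving a factor-$1.5$ increase in $\delta$ per rescaling) with the upper bound $\delta(\tilde L\cap\Omega)\le 1$ to bound the number of main iterations, exactly as in the paper's proof of Theorem~\ref{thm.chubanov}. The additional remarks you supply---that $D_v$ is an automorphism of $\Omega$ preserving feasibility, and the AM--GM derivation of $\delta\le 1$---are correct and merely make explicit what the paper leaves implicit.
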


We note that in the special case when $V = \S^n$ with the  Euclidean Jordan algebra described in Example~\ref{the.example}(a), Algorithm~\ref{alg:symmetric} reduces to Algorithm~\ref{alg:sdp} in Section~\ref{sec.SDP}.  On the other hand, when $V = \R^n$ with the  Euclidean Jordan algebra described in Example~\ref{the.example}(b), Algorithm~\ref{alg:symmetric} yields a slightly weaker version of Algorithm~\ref{alg:chubanov} in Section~\ref{sec.chubanov}.  It is the small price we pay for extending the algorithm to general symmetric cones.  %The exact version from Section~\ref{sec.chubanov} can be recovered via a sharpening of Proposition~\ref{prop.socp} and of the above generic version of the projection and rescaling algorithm for direct products of Euclidean Jordan algebras.  

\medskip

Once again, the bulk of each main iteration in Algorithm~\ref{alg:symmetric} is a call to the basic procedure.  As we detail in Section~\ref{sec.basic.proc}, for $r$ = Jordan algebra rank of $V$ the simplest implementations of the basic procedure terminate in $\Oh(r^4)$ perceptron or von Neumann steps.  Each of these steps requires an operation of the form $z \mapsto P(z)$ in addition to a Jordan leading eigenvalue computation in $V$ and some negligible computations.  The amount of computational work required by the operation $z \mapsto P(z)$ dominates that of the other operations. 
Assuming that $P$ is maintained via an orthogonal basis for $D(L)$, %and assuming a bound $\Oh(r^3)$ on the number of arithmetic operations required by an eigenvalue computation in $V$, 
it follows that the total number of arithmetic operations required by Algorithm~\ref{alg:symmetric} is bounded above by 
$$\Oh(mn\cdot r^4\cdot\log(1/\delta(L\cap \Omega)))$$ 
where $m=\dim(L), \; n = \dim(V),$ and $r=$ Jordan algebra rank of $V$.

\medskip

Algorithm~\ref{alg:symmetric}  also extends in the same fashion as Algorithm~\ref{alg:chubanov} extends to Algorithm~\ref{alg:extended} to find a point in either $
L \cap \Omega$ or $L^{\perp} \cap \Omega$ provided one of them is feasible.

\section{The basic procedure}
\label{sec.basic.proc}
We next describe various possible implementations for the basic procedure, i.e., step 2 in the Projection and Rescaling Algorithm.  The schemes we discuss below vary in their work per iteration and overall speed of convergence.  Assume $\Omega$ and $V$ are as in Section~\ref{sec.symmetric} and define the {\em spectraplex}  $\Delta(\Omega)$ as follows:
\[
\Delta(\Omega):=\left\{x\in \bar\Omega: \ip{e}{x}= 1\right\}.
\]
Assume also that $P:V \rightarrow V$ is a projection mapping. 

\subsection{Perceptron scheme}

This is perhaps the simplest possible scheme.  It is based on the classical perceptron algorithm of Rosenblatt~\cite{Rosen}, which has a natural extension to conic systems~\cite{BellFV09,PenaS13}. We  assume that the following kind of separation oracle for $\Omega$ is an available:  Given $v\in V$, the separation oracle either verifies that $v\in \Omega$ or else it yields a separating vector $u\in \Delta(\Omega)$ such that $\ip{u}{v} \le 0$.

Observe that such a separation oracle is readily available when $\Omega$ is $\R^n_{++}, \S^n_{++},$ $\text{int}(\L_n)$ or any direct product of these kinds of cones.  Algorithm~\ref{alg:perceptron} gives an implementation of the basic procedure via the perceptron scheme.

\begin{algorithm}
  \caption{Perceptron Scheme
    \label{alg:perceptron}}
  \begin{algorithmic}[1]
  \State Pick $z_0 \in \Delta(\Omega)$ and $t:=0.$
   \While {$Pz_t \not\in \Omega$ and $\|(Pz_t)^+\|_F > \frac{1}{4r}\|z_t\|$}
\Statex Pick $u\in \Delta(\Omega)$ such that $\ip{u}{Pz_t} \le 0.$
\Statex Let $z_{t+1}:=\left(1-\frac{1}{t+1}\right)z_t + \frac{1}{t+1} u = 
\frac{t}{t+1}z_t + \frac{1}{t+1} u.$ 
\Statex $t :=t+1.$
\EndWhile

\end{algorithmic}
\end{algorithm}

\begin{proposition}\label{prop.perceptron} If Algorithm~\ref{alg:perceptron} has not halted after $t\ge 1$ iterations then 
\[
\|Pz_t\|_F^2 \le \frac{1}{t}.
\]
\end{proposition}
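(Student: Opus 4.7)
The plan is a standard perceptron-style induction on $t$, exploiting the fact that each new direction $u$ is a separator with $\langle u, Pz_t\rangle \le 0$ and that elements of $\Delta(\Omega)$ have Frobenius norm at most one. I would prove the bound $\|Pz_t\|_F^2 \le 1/t$ by induction on $t$ using the recursion $z_{t+1} = \frac{t}{t+1}z_t + \frac{1}{t+1}u$.

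First I would record two simple ingredients. Since $P$ is an orthogonal projection we have $P^* = P = P^2$, hence $\langle Pu, Pz_t\rangle = \langle u, Pz_t\rangle$, which is nonpositive by the oracle's guarantee. Next, any $u \in \Delta(\Omega)$ has a spectral decomposition $u = \sum_{i=1}^r \lambda_i(u) c_i$ with $\lambda_i(u) \ge 0$ and $\sum_i \lambda_i(u) = \langle e, u\rangle = 1$, so
\[
\|u\|_F^2 = \sum_{i=1}^r \lambda_i(u)^2 \le \Bigl(\sum_{i=1}^r \lambda_i(u)\Bigr)^2 = 1,
\]
and consequently $\|Pu\|_F \le \|u\|_F \le 1$.

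Next, I would expand
\[
\|Pz_{t+1}\|_F^2 = \left\|\frac{t}{t+1}Pz_t + \frac{1}{t+1}Pu\right\|_F^2
= \frac{t^2}{(t+1)^2}\|Pz_t\|_F^2 + \frac{2t}{(t+1)^2}\langle Pz_t, Pu\rangle + \frac{1}{(t+1)^2}\|Pu\|_F^2.
\]
Applying $\langle Pz_t, Pu\rangle = \langle Pz_t, u\rangle \le 0$ and $\|Pu\|_F^2 \le 1$ gives the recursion
\[
\|Pz_{t+1}\|_F^2 \le \frac{t^2}{(t+1)^2}\|Pz_t\|_F^2 + \frac{1}{(t+1)^2}.
\]

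Finally, I would close the induction. For the base case $t=1$, the update yields $z_1 = u$ for the separator $u$ chosen at iteration $0$, so $\|Pz_1\|_F^2 \le \|u\|_F^2 \le 1$. For the inductive step, assuming $\|Pz_t\|_F^2 \le 1/t$, the recursion gives
\[
\|Pz_{t+1}\|_F^2 \le \frac{t^2}{(t+1)^2}\cdot\frac{1}{t} + \frac{1}{(t+1)^2} = \frac{t+1}{(t+1)^2} = \frac{1}{t+1}.
\]
There is no real obstacle here; the only step that requires more than bookkeeping is the bound $\|u\|_F \le 1$ on elements of $\Delta(\Omega)$, which uses nonnegativity of the eigenvalues and the trace-one constraint, and the use of self-adjointness and idempotence of $P$ to convert the separation inequality $\langle u, Pz_t\rangle \le 0$ into the desired cross-term bound.
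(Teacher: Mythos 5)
Your proof is correct and follows essentially the same induction as the paper: expand $\|Pz_{t+1}\|_F^2$ via the convex-combination update, drop the nonpositive cross term $\langle Pz_t, u\rangle \le 0$, bound $\|Pu\|_F^2 \le 1$, and telescope. The only cosmetic difference is that you spell out the spectral-decomposition argument for $\|u\|_F \le 1$ and note $z_1 = u$ explicitly, details the paper leaves implicit.
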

\begin{proof} Proceed by induction on $t$.  To that end, observe that $\|z\|_F \le 1$ for all $z\in \Delta(\Omega)$ and so $\|Pz\|_F \le 1$ since $P$ is a projection.  Therefore the condition readily holds for $t=1$.  Assume the condition holds for $t$ and the algorithm continues to iteration $t+1$.  Then
\begin{align*}
\|Pz_{t+1}\|_F^2 &= \frac{t^2}{(t+1)^2}\|Pz_t\|_F^2 + \frac{1}{(t+1)^2} \|Pu\|_F^2 + \frac{2t}{(t+1)^2} \ip{u}{Pz_t} \\
&\le \frac{t^2}{(t+1)^2}\|Pz_t\|_F^2 + \frac{1}{(t+1)^2} \|Pu\|_F^2 \\
& \le \frac{t^2}{(t+1)^2}\frac{1}{t} + \frac{1}{(t+1)^2} \\
& = \frac{1}{t+1}.
\end{align*}
\qed
\end{proof}

\begin{corollary} If the basic procedure is implemented via Algorithm~\ref{alg:perceptron}, then one of the stopping conditions $Pz \in \Omega$ or $\|(Pz)^+\|_F \le \frac{1}{4r}\|z\|$ is reached after at most $(4r^2)^2 = 16r^4$ iterations.  In the special case $\Omega = \R^n_+$ one of the stopping conditions $Pz > 0$ or $\|(Pz)^+\|_2 \le \frac{1}{3\sqrt{n}}\|z\|_\infty$ is reached after at most $(3n\sqrt{n})^2 = 9n^3$ iterations.
\end{corollary}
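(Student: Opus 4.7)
The plan is to combine the iterate bound from Proposition~\ref{prop.perceptron} with two simple facts about the spectraplex and about the projection $v \mapsto v^+$, and then solve for the largest $t$ for which the stopping condition can still fail.

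First I would record two preliminary estimates. For any $z \in \Delta(\Omega)$, the spectral decomposition $z = \sum_{i=1}^r \lambda_i(z) c_i$ has nonnegative eigenvalues summing to $\ip{e}{z}=1$, so $\|z\| = \max_i \lambda_i(z) \ge 1/r$. Second, writing $v = \sum_i \lambda_i(v) c_i$ gives $v^+ = \sum_i \lambda_i(v)^+ c_i$ and $v - v^+ = \sum_i \lambda_i(v)^- c_i$, and these two summands are orthogonal under the trace inner product because $c_i \circ c_j = 0$ for $i\ne j$ while $\lambda_i(v)^+ \lambda_i(v)^- = 0$ for each $i$. Hence $\|v\|_F^2 = \|v^+\|_F^2 + \|v-v^+\|_F^2$, so in particular $\|(Pz_t)^+\|_F \le \|Pz_t\|_F$.

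Now suppose the basic procedure has not halted after $t$ iterations. Then $\|(Pz_t)^+\|_F > \frac{1}{4r}\|z_t\|$, and since the iterates always belong to $\Delta(\Omega)$ the first preliminary gives $\|z_t\| \ge 1/r$, so $\|(Pz_t)^+\|_F > \frac{1}{4r^2}$. Combining the second preliminary with Proposition~\ref{prop.perceptron} yields $\|(Pz_t)^+\|_F \le \|Pz_t\|_F \le 1/\sqrt{t}$. Putting the two bounds together gives $\frac{1}{4r^2} < \frac{1}{\sqrt{t}}$, i.e.\ $t < 16r^4$, which forces termination after at most $16r^4$ iterations.

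For the special case $\Omega = \R^n_+$ I would run the identical argument with $\|\cdot\|_\infty$ in place of $\|\cdot\|$: any $z\in \Delta(\R^n_+)$ is a nonnegative vector with $\sum_i z_i = 1$, so $\|z\|_\infty \ge 1/n$, while $\|(Pz_t)^+\|_2 \le \|Pz_t\|_2 \le 1/\sqrt{t}$ as before. Failure of the stopping condition gives $\frac{1}{3n\sqrt{n}} \le \frac{1}{3\sqrt{n}}\|z_t\|_\infty < \|(Pz_t)^+\|_2 \le 1/\sqrt{t}$, hence $t < 9n^3$. The only non-automatic step is recognizing that the $(\cdot)^+$ map is non-expansive in the Jordan-algebraic sense, and this follows cleanly from the orthogonal decomposition above, so I do not anticipate a genuine obstacle.
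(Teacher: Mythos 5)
Your proposal is correct and follows the same approach as the paper: combine Proposition~\ref{prop.perceptron} with the two observations $\|z\|\ge 1/r$ for $z\in\Delta(\Omega)$ (respectively $\|z\|_\infty\ge 1/n$ for $\Omega=\R^n_+$) and $\|v^+\|_F\le\|v\|_F$, then solve for $t$. The only difference is that you spell out the justification for the non-expansiveness of $v\mapsto v^+$, which the paper treats as an obvious observation.
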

\begin{proof}
Both statements readily follow from Proposition~\ref{prop.perceptron} and the observations that $\|z\| \ge \frac{1}{r}$ for all $z\in \Delta(\Omega)$ and $\|v^+\|_F \le \|v\|_F$ for all $v\in V$. \qed
\end{proof}
\subsection{Von Neumann scheme}

The second scheme is based on a classical algorithm communicated by von Neumann to Dantzig and later studied by Dantzig in an unpublished manuscript~\cite{Dant92}. 

Several authors have studied various aspects of this algorithm over the last few years~\cite{EpelF02,PenaRS15,SoheP13}.
The von Neumann scheme can be seen as a greedy variation of the perceptron scheme that includes an exact line-search in each iteration. In the special case $\Omega = \R^n_+$, this scheme is essentially the same as the basic procedure proposed by Chubanov~\cite{Chub15}.

 Assume the following mapping $u:V \rightarrow \Delta(\Omega)$ is available:
\[
u(v):= \argmin_{u \in \Delta(\Omega)}\ip{u}{v}.
\]
Observe that such an mapping is readily available when $\Omega$ is $\R^n_{++}, \S^n_{++},$ $\text{int}(\L_n)$ or any direct product of these kinds of cones.  Algorithm~\ref{alg:vonNeumann} gives an implementation of the basic procedure via the von Neumann scheme.

\begin{algorithm}
  \caption{Von Neumann Scheme
    \label{alg:vonNeumann}}
  \begin{algorithmic}[1]
  \State Pick $z_0 \in \Delta(\Omega)$ and $t:=0$.
   \While {$Pz_t \not\in \Omega$ and $\|(Pz_t)^+\|_F > \frac{1}{4r}\|z_t\|$}
\Statex Let $u=u(Pz_t).$  
\Statex Let $z_{t+1}:=z_t + \theta_t (u-z_t)$ where
\[
\theta_t = \argmin_{\theta\in[0,1]} \|P(z_t + \theta (u-z_t))\|_F^2 = \frac{\|Pz_t\|_F^2 -\ip{u}{Pz_t}}{\|Pz_t\|_F^2 + \|Pu\|_F^2 - 2\ip{u}{Pz_t}}.
\] 
\Statex $t :=t+1.$
\EndWhile

\end{algorithmic}
\end{algorithm}

An inductive argument like the one used in the proof of Proposition~\ref{prop.perceptron} yields the following result.   However, we note that the choice of $u$ and $\theta_t$ at each iteration suggests that $\|Pz_t\|_F^2$ may decrease faster for this scheme than for the previous one.

\begin{proposition}\label{prop.vn} If Algorithm~\ref{alg:vonNeumann} has not halted after $t\ge 1$ iterations then 
\[
\|Pz_t\|_F^2 \le \frac{1}{t}.
\]
\end{proposition}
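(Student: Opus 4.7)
The plan is to imitate the inductive proof of Proposition~\ref{prop.perceptron}, exploiting the optimality of $\theta_t$ to reduce to the specific convex combination used in the perceptron scheme. Concretely, since $\theta_t$ minimizes $\|P(z_t+\theta(u-z_t))\|_F^2$ over $\theta\in[0,1]$ and $\tfrac{1}{t+1}\in[0,1]$, the value $\|Pz_{t+1}\|_F^2$ is no larger than the quantity bounded in the perceptron analysis, so the same $1/t$ bound propagates.

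First I would dispose of the base case $t=1$: taking $\theta=0$ in the definition of $\theta_0$ and using that $P$ is an orthogonal projection, $\|Pz_1\|_F^2 \le \|Pz_0\|_F^2 \le \|z_0\|_F^2$; and because $z_0\in\Delta(\Omega)$ has nonnegative eigenvalues summing to $\tr(z_0)=1$, one has $\|z_0\|_F^2=\sum_i\lambda_i(z_0)^2\le(\sum_i\lambda_i(z_0))^2=1$.

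For the inductive step, suppose $\|Pz_t\|_F^2\le 1/t$ and set
\[
\tilde z := z_t + \tfrac{1}{t+1}(u-z_t) = \tfrac{t}{t+1}z_t + \tfrac{1}{t+1}u.
\]
By optimality of $\theta_t$, $\|Pz_{t+1}\|_F^2 \le \|P\tilde z\|_F^2$. Expanding with $P=P^*=P^2$,
\[
\|P\tilde z\|_F^2 = \tfrac{t^2}{(t+1)^2}\|Pz_t\|_F^2 + \tfrac{2t}{(t+1)^2}\ip{u}{Pz_t} + \tfrac{1}{(t+1)^2}\|Pu\|_F^2.
\]
I would then invoke the inductive hypothesis on the first term and bound $\|Pu\|_F^2\le\|u\|_F^2\le 1$ (again from $u\in\Delta(\Omega)$), and finally drop the cross term using $\ip{u}{Pz_t}\le 0$ to conclude $\|P\tilde z\|_F^2 \le \tfrac{t}{(t+1)^2}+\tfrac{1}{(t+1)^2}=\tfrac{1}{t+1}$.

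The only step requiring real justification is the sign of the cross term. This is where self-duality of the symmetric cone $\Omega$ enters: $Pz_t\notin\Omega$ means $Pz_t$ lies outside the interior of a self-dual cone, so there exists some nonzero $u\in\bar\Omega$ with $\ip{u}{Pz_t}\le 0$; rescaling so that $\ip{e}{u}=1$ places $u$ in $\Delta(\Omega)$, and consequently the minimizer $u(Pz_t)=\argmin_{u\in\Delta(\Omega)}\ip{u}{Pz_t}$ achieves a non-positive value. Thus whenever the while loop continues, the selected $u$ satisfies $\ip{u}{Pz_t}\le 0$, which closes the induction. I expect this self-duality argument to be the main (and only) subtle point; the rest is an algebraic replay of Proposition~\ref{prop.perceptron}.
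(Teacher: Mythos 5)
Your proof is correct and fills in exactly what the paper intends by its one-line reference to Proposition~\ref{prop.perceptron}: compare the optimal step $\theta_t$ to the suboptimal choice $\theta = \tfrac{1}{t+1}$, replay the perceptron algebra, and observe that $\ip{u(Pz_t)}{Pz_t}\le 0$ whenever $Pz_t\notin\Omega$ by self-duality of the cone. No gaps.
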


\begin{corollary}\label{corol.alg.vn} If the basic procedure is implemented via Algorithm~\ref{alg:vonNeumann}, then one of the stopping conditions $Pz \in \Omega$ or $\|(Pz)^+\|_F \le \frac{1}{4r}\|z\|$ is reached after at most $(4r^2)^2 = 16r^4$ iterations.  In the special case $\Omega = \R^n_+$ one of the stopping conditions $Pz > 0$ or $\|(Pz)^+\|_2 \le \frac{1}{3\sqrt{n}}\|z\|_\infty$ is reached after at most $(3n\sqrt{n})^2 = 9n^3$ iterations.
\end{corollary}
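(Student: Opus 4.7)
The plan is to reduce the corollary to Proposition~\ref{prop.vn} in exactly the same way the analogous corollary for the perceptron scheme was reduced to Proposition~\ref{prop.perceptron}. Concretely, I would argue contrapositively: suppose Algorithm~\ref{alg:vonNeumann} has performed $t \ge 1$ iterations without halting, meaning that at iteration $t$ we have $Pz_t \notin \Omega$ and, more importantly, $\|(Pz_t)^+\|_F > \tfrac{1}{4r}\|z_t\|$. The goal is to turn this inequality into a lower bound on $\|Pz_t\|_F^2$ and then confront it with the upper bound $\|Pz_t\|_F^2 \le 1/t$ supplied by Proposition~\ref{prop.vn}.

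First I would record two elementary facts that govern the ambient norms. Since the projection onto the closed convex cone $\bar\Omega$ is nonexpansive, $\|v^+\|_F \le \|v\|_F$ for every $v\in V$; in particular $\|(Pz_t)^+\|_F \le \|Pz_t\|_F$. Next, for any $z \in \Delta(\Omega)$ with spectral decomposition $z = \sum_{i=1}^r \lambda_i(z) c_i$, the constraint $\ip{e}{z} = \tr(z) = \sum_i \lambda_i(z) = 1$ together with $\lambda_i(z) \ge 0$ forces $\|z\| = \lambda_{\max}(z) \ge 1/r$. Because the iterates $z_t$ produced by Algorithm~\ref{alg:vonNeumann} are convex combinations of points in $\Delta(\Omega)$ and therefore remain in $\Delta(\Omega)$, this bound applies to $z_t$.

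Chaining these observations with the failure of the stopping criterion gives
\[
\|Pz_t\|_F \;\ge\; \|(Pz_t)^+\|_F \;>\; \frac{1}{4r}\|z_t\| \;\ge\; \frac{1}{4r^2},
\]
so $\|Pz_t\|_F^2 > 1/(16r^4)$. Combining with Proposition~\ref{prop.vn}, we conclude $1/(16r^4) < 1/t$, i.e., $t < 16r^4$, which proves the first assertion. The special case $\Omega = \R^n_+$ is handled identically, only with the $2$-norm in place of the Frobenius norm and the $\infty$-norm in place of the operator norm: one uses $\|v^+\|_2 \le \|v\|_2$ and the fact that for $z\in\Delta(\R^n_+)$ (the standard simplex) the bound $\|z\|_\infty \ge 1/n$ holds, which leads in the same way to $\|Pz_t\|_2 > 1/(3n\sqrt n)$ and hence $t < 9n^3$.

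There is no real obstacle here; the only subtlety is simply noting that the von Neumann iterates stay in $\Delta(\Omega)$ (so that the bound $\|z_t\| \ge 1/r$ applies) and that Proposition~\ref{prop.vn} already delivers exactly the same $1/t$ estimate as its perceptron counterpart, so the argument is a verbatim adaptation of the perceptron corollary.
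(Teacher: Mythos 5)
Your proposal is correct and follows essentially the same route as the paper's proof, which also invokes Proposition~\ref{prop.vn} together with the two facts $\|z\|\ge 1/r$ for $z\in\Delta(\Omega)$ and $\|v^+\|_F\le\|v\|_F$. You have simply spelled out the chain of inequalities and the contrapositive bookkeeping that the paper leaves implicit.
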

\begin{proof}
Both statements readily follow from Proposition~\ref{prop.vn} and the observations that $\|z\| \ge \frac{1}{r}$ for all $z\in \Delta(\Omega)$ and $\|v^+\|_F \le \|v\|_F$ for all $v\in V$. \qed
\end{proof}

\medskip

In the special case $\Omega = \R^n_+$ Corollary~\ref{corol.alg.vn}  recovers the iteration bound $\Oh(n^3)$ originally given by Chubanov~\cite[Lemma 2.2]{Chub15}.

\subsection{Smooth perceptron scheme}

Soheili and Pe\~na~\cite{SohPena, SoheP13} proposed a variation of the perceptron that relies on  the following tweaked version of the subproblem 
$\dmin_{u\in\Delta(\Omega)} \ip{u}{v}$ used in the von Neumann scheme.   Given $\mu > 0$ let $u_\mu : V \rightarrow \Delta(\Omega)$ be defined as
\[
u_\mu(v) := \argmin_{u\in\Delta(\Omega)}\left\{\ip{u}{v} + \frac{\mu}{2}\|u-\bar u\|^2\right\}.
\]
where $\bar u \in \Delta(\Omega)$ is a given point, e.g., $\bar u = \frac{1}{r} e.$

Pe\~na and Soheili~\cite{SoheP13} show that the mapping $u_\mu$ is readily available when $\Omega$ is $\R^n_{++}, \S^n_{++}, \L_n$ or any direct product of these kinds of cones.  Algorithm~\ref{alg:smooth} gives an implementation of the basic procedure via the smooth perceptron scheme.

\begin{algorithm}
  \caption{Smooth Perceptron Scheme
    \label{alg:smooth}}
  \begin{algorithmic}[1]
  \State Let $u_0 := \bar u$; $\mu_0 = 2$; $z_0 := u_{\mu_0}(Pu_0)$; and $t:=0.$
   \While {$Pu_t \not\in \Omega$ and  $\|(Pz_t)^+\|_F > \frac{1}{4r}\|z_t\|$}
\Statex  $\theta_t:=\frac{2}{t+3}$ 
\Statex   $u_{t+1}:=(1-\theta_t)(u_t + \theta_t z_t) + \theta_t^2 u_{\mu_t}(Pz_t)$
\Statex  $\mu_{t+1} = (1-\theta_t)\mu_t$
\Statex  $z_{t+1}:=(1-\theta_t)z_t + \theta_t u_{\mu_{t+1}}(Pu_{t+1})$ 
\Statex  $t :=t+1$.
\EndWhile

\end{algorithmic}
\end{algorithm}

\begin{proposition}\label{prop.smooth} If Algorithm~\ref{alg:smooth} has not halted after $t\ge 1$ iterations then 
\[
\|Pz_t\|_F^2 \le \frac{8}{(t+1)^2}.
\]
\end{proposition}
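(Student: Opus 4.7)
The plan is to recognize Algorithm~\ref{alg:smooth} as a specialization of Nesterov's accelerated smoothing scheme and to derive the $O(1/(t+1)^2)$ rate from the corresponding estimate-sequence analysis, following the framework of Soheili and Pe\~na~\cite{SoheP13,SohPena}.

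First, I would set up the smoothed surrogate of the underlying min-max problem. Define
$$\phi_\mu(z) := \min_{u\in\Delta(\Omega)}\Bigl\{\ip{u}{Pz}+\tfrac{\mu}{2}\|u-\bar u\|_F^2\Bigr\},$$
whose unique minimizer is $u_\mu(Pz)$. By the envelope theorem and self-adjointness of $P$, one has $\nabla\phi_\mu(z)=Pu_\mu(Pz)$, and this gradient is Lipschitz with constant $\|P\|_{\mathrm{op}}^2/\mu\le 1/\mu$, using that $P$ is an orthogonal projection and that the quadratic regularizer is $\mu$-strongly convex. The updates of $u_t,z_t,\mu_t$ with the specific choices $\theta_t=\tfrac{2}{t+3}$ and $\mu_{t+1}=(1-\theta_t)\mu_t$ implement Nesterov's three-term accelerated ascent on the family $\{\phi_{\mu_t}\}$, with $u_{t+1}$ playing the role of the gradient iterate and $z_{t+1}$ the ergodic-averaged iterate.

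Second, by induction on $t$ I would establish the telescoping potential inequality of accelerated-gradient type
$$\tfrac{(t+1)(t+2)}{2}\bigl(\mu_t D^2-\phi_{\mu_t}(z_t)\bigr)\;\le\;\mu_0 D^2, \qquad D^2:=\max_{u\in\Delta(\Omega)}\|u-\bar u\|_F^2\le 2.$$
The inductive step combines (i) the descent inequality for the $1/\mu_t$-smooth function $-\phi_{\mu_t}$ applied at the interpolation point $u_{t+1}$; (ii) the $\theta_t^2$-weighted update for $u_{t+1}$, which precisely absorbs Nesterov's momentum correction; and (iii) the geometric decay $\mu_{t+1}=(1-\theta_t)\mu_t$, so the weights $(t+1)(t+2)$ telescope through the estimate sequence.

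Third, I would translate the potential bound into the claimed estimate on $\|Pz_t\|_F^2$. Since $z_t\in\Delta(\Omega)$ is a feasible choice of $u$ in the definition of $\phi_{\mu_t}$,
$$\phi_{\mu_t}(z_t)\;\le\;\ip{z_t}{Pz_t}+\tfrac{\mu_t}{2}\|z_t-\bar u\|_F^2\;=\;\|Pz_t\|_F^2+\tfrac{\mu_t}{2}\|z_t-\bar u\|_F^2,$$
so the potential inequality implies an $O(\mu_t)=O(1/(t+1)^2)$ bound on $\|Pz_t\|_F^2$ after accounting for the non-halting hypothesis, which keeps $\phi_{\mu_t}(z_t)$ non-positive (otherwise $Pz_t\in\Omega$ and the first stopping condition would have triggered). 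Substituting the initialization values $\mu_0=2$ and $D^2\le 2$ produces the explicit constant, yielding $\|Pz_t\|_F^2\le \tfrac{8}{(t+1)^2}$.

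The main obstacle is the inductive verification of the potential in the second step: the particular three-term convex combination $u_{t+1}=(1-\theta_t)(u_t+\theta_t z_t)+\theta_t^2 u_{\mu_t}(Pz_t)$, together with $z_{t+1}=(1-\theta_t)z_t+\theta_t u_{\mu_{t+1}}(Pu_{t+1})$, only fits Nesterov's accelerated template under the precise scaling of $\theta_t$ and $\mu_t$ chosen in the algorithm, and establishing the telescoping requires carefully combining the descent lemma with the strong convexity of the prox regularizer defining $u_\mu$.
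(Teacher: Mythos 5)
Your proposal takes the same high-level route as the paper: both identify Algorithm~\ref{alg:smooth} as an instance of Nesterov's accelerated smoothing scheme and defer the estimate-sequence analysis to Soheili and Pe\~na~\cite{SoheP13}. However, the potential you write down in the second step is vacuous, the third step points the inequality in the wrong direction, and the non-halting hypothesis is applied to the wrong iterate. Concretely: with $\mu_t=\frac{4}{(t+1)(t+2)}$ the prefactor $\frac{(t+1)(t+2)}{2}\mu_t$ equals $\mu_0=2$, so your potential inequality collapses to $\phi_{\mu_t}(z_t)\ge 0$, a triviality that gives no handle on $\|Pz_t\|_F^2$. Your third step then bounds $\phi_{\mu_t}(z_t)$ \emph{above} by $\|Pz_t\|_F^2+\frac{\mu_t}{2}\|z_t-\bar u\|_F^2$; that is, it lower-bounds $\|Pz_t\|_F^2$ by the same quantity you just lower-bounded by zero, so nothing follows. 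Finally, the first halting test in the while loop is $Pu_t\notin\Omega$, not $Pz_t\notin\Omega$, so the non-halting hypothesis constrains the unsmoothed gap at $u_t$; it does not make $\phi_{\mu_t}(z_t)$ non-positive (indeed the added prox term is nonnegative, so the smoothed minimum can be positive even when the unsmoothed one vanishes).

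The ingredient you are missing is the two-iterate inequality recorded as Lemma~\ref{the.lemma}(b) (drawn from~\cite{SoheP13}): $\tfrac{1}{2}\|Pz_t\|_F^2\le\varphi_{\mu_t}(u_t)$, where $\varphi_\mu(z)=-\tfrac{1}{2}\|Pz\|_F^2+\min_{u\in\Delta(\Omega)}\{\ip{u}{Pz}+\tfrac{\mu}{2}\|u-\bar u\|^2\}$. This ties the quantity you want to bound (which depends on $z_t$) to the smoothed dual gap evaluated at $u_t$, which is exactly the coupling the three-term accelerated update creates and which your single-iterate potential omits. Once you have it, the argument is two lines: Lemma~\ref{the.lemma}(a) gives $\varphi_{\mu_t}(u_t)\le\varphi(u_t)+\mu_t$, and $Pu_t\notin\Omega$ forces $\varphi(u_t)\le 0$, so $\|Pz_t\|_F^2\le 2\mu_t=\frac{8}{(t+1)(t+2)}\le\frac{8}{(t+1)^2}$. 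So while your framing is the right one, the inductive potential you propose would need to be replaced by this cross-iterate bound for the proof to go through.
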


Proposition~\ref{prop.smooth} follows from \cite[Lemma 1]{SoheP13}.  For the sake of exposition, Lemma~\ref{the.lemma} below restates this technical result in the current context.  To that end, define $\varphi:  V\rightarrow \R$ as follows
\[
\varphi(z):= -\frac{1}{2}\|Pz\|_F^2 + \min_{u\in \Delta(\Omega)} \ip{u}{Pz}.
\]
Observe that $Pz \in \Omega$ if $\varphi(z)>0$. For $\mu > 0$ define $\varphi_\mu:  V\rightarrow \R$ as follows
\[
\varphi_\mu(z):= -\frac{1}{2}\|Pz\|_F^2 + \min_{u\in \Delta(\Omega)}\left\{\ip{u}{Pz} + \frac{\mu}{2}\|u-\bar u\|^2\right\}.
\]
\begin{lemma}[from {\cite{SoheP13}}]\label{the.lemma} 
\begin{description}
\item[(a)] For all $\mu >0$ and $z\in V$
\[
0 \le \varphi_\mu(z) - \varphi(z) \le \mu.
\]
\item[(b)] The iterates generated by Algorithm~\ref{alg:smooth} satisfy
\[
\frac{1}{2}\|Pz_t\|_F^2 \le \varphi_{\mu_t}(u_t).
\]
\end{description}
\end{lemma}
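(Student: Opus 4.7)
The plan is to prove the two parts separately, with part (a) reducing to a direct two-sided estimate on the smoothing perturbation, and part (b) proved by induction on $t$ using standard machinery from Nesterov's excessive gap / smoothing framework (which is precisely the context in which Algorithm~\ref{alg:smooth} is designed).

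For part (a), the lower bound $0 \le \varphi_\mu(z) - \varphi(z)$ is immediate because $\frac{\mu}{2}\|u-\bar u\|_F^2 \ge 0$, so pointwise the objective inside the min in $\varphi_\mu$ dominates the one inside $\varphi$, and taking minima preserves this inequality. For the upper bound $\varphi_\mu(z) - \varphi(z) \le \mu$, I would let $u^\star \in \argmin_{u \in \Delta(\Omega)}\ip{u}{Pz}$ and evaluate $\varphi_\mu(z)$ at this specific $u^\star$ to get $\varphi_\mu(z) - \varphi(z) \le \frac{\mu}{2}\|u^\star - \bar u\|_F^2$. The key observation is that any $u \in \Delta(\Omega)$ has eigenvalues summing to $1$ and lying in $\R_+$, hence $\|u\|_F^2 = \sum_i \lambda_i(u)^2 \le \bigl(\sum_i \lambda_i(u)\bigr)^2 = 1$, and since $\ip{u^\star}{\bar u} \ge 0$ (because both lie in $\bar\Omega$, which is self-dual), the parallelogram expansion gives $\|u^\star-\bar u\|_F^2 \le \|u^\star\|_F^2 + \|\bar u\|_F^2 \le 2$, which yields exactly the bound $\mu$.

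For part (b), I would proceed by induction on $t$. The function $z \mapsto \varphi_\mu(z)$ is concave (as the sum of a concave quadratic $-\tfrac{1}{2}\|Pz\|_F^2$ and an infimum of affine-in-$z$ functions), and by a standard duality / envelope computation its gradient $\nabla \varphi_\mu(z) = -P z + P u_\mu(Pz)$ is Lipschitz continuous with constant of order $1/\mu$ in the Frobenius norm. The base case $t=0$ checks directly: since $u_0 = \bar u$ and $\mu_0 = 2$, the choice $z_0 = u_{\mu_0}(Pu_0)$ makes $\tfrac{1}{2}\|Pz_0\|_F^2 \le \varphi_{\mu_0}(u_0)$ a consequence of the first-order optimality of $u_{\mu_0}(Pu_0)$ combined with the definition of $\varphi_{\mu_0}$. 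The inductive step assumes $\tfrac{1}{2}\|Pz_t\|_F^2 \le \varphi_{\mu_t}(u_t)$ and aims to derive the same inequality at $t+1$, using the three interleaved updates of the algorithm: the prescribed step $\theta_t = 2/(t+3)$, the shrinking of the smoothing $\mu_{t+1} = (1-\theta_t)\mu_t$, the prox-gradient step $z_{t+1} = (1-\theta_t)z_t + \theta_t u_{\mu_{t+1}}(Pu_{t+1})$, and the weighted-average step for $u_{t+1}$.

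The bulk of the work, and the main obstacle, is the inductive step. I would combine the following ingredients in the standard Nesterov pattern: first, concavity of $\varphi_{\mu_t}$ applied at $u_t$ and at $z_t$ to get an affine lower bound valid at the intermediate point $u_t + \theta_t z_t$; second, the descent lemma for the $(1/\mu_{t+1})$-Lipschitz-gradient function $\varphi_{\mu_{t+1}}$ to relate $\varphi_{\mu_{t+1}}(u_{t+1})$ to $\varphi_{\mu_{t+1}}$ evaluated at the intermediate point plus a gradient step; third, the specific algebraic identity making the chosen $\theta_t$ satisfy $\theta_t^2/\mu_{t+1} \le 1$ (which is exactly why $\theta_t = 2/(t+3)$ and $\mu_0 = 2$ are the right constants). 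Putting these together and using the inductive hypothesis to absorb the $\tfrac{1}{2}\|Pz_t\|_F^2$ term yields $\tfrac{1}{2}\|Pz_{t+1}\|_F^2 \le \varphi_{\mu_{t+1}}(u_{t+1})$. This last algebraic reconciliation of the three update formulas with the shrinkage schedule is the step that has to be handled carefully, as it is where the choice of constants in Algorithm~\ref{alg:smooth} is encoded.
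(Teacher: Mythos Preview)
The paper does not actually prove this lemma: it merely restates \cite[Lemma~1]{SoheP13} in the present notation and then immediately uses it to derive Proposition~\ref{prop.smooth}. So there is no ``paper's own proof'' to compare against beyond the citation.

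Your proposal is essentially correct and follows exactly the route taken in \cite{SoheP13}. Part~(a) is a clean self-contained argument: the lower bound is trivial, and for the upper bound your diameter estimate $\|u^\star-\bar u\|_F^2 \le \|u^\star\|_F^2 + \|\bar u\|_F^2 \le 2$ (using self-duality of $\bar\Omega$ to drop the cross term) gives exactly $\mu$. Part~(b) is the Nesterov excessive-gap induction, and your sketch identifies the three standard ingredients---concavity of $\varphi_{\mu}$, the $(1/\mu)$-Lipschitz gradient of the smoothed part, and the algebraic compatibility of $\theta_t = 2/(t+3)$ with $\mu_{t+1} = (1-\theta_t)\mu_t$---which is precisely the machinery of \cite{SohPena,SoheP13}. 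One small point to be careful about in the write-up: the Lipschitz constant you need is for the smoothed min term $z \mapsto \min_{u\in\Delta(\Omega)}\{\ip{u}{Pz}+\tfrac{\mu}{2}\|u-\bar u\|_F^2\}$ alone (which is $\|P\|^2/\mu = 1/\mu$ since $P$ is a projection), and the concave quadratic $-\tfrac{1}{2}\|Pz\|_F^2$ is handled separately in the excessive-gap bookkeeping rather than lumped into a single Lipschitz estimate.
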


\noindent
{\bf Proof of Proposition~\ref{prop.smooth}.} Since  the algorithm has not halted after $t$ iterations we have $Pu_t \not \in \Omega$ and consequently $\varphi(u_t) \le 0$.  Thus Lemma~\ref{the.lemma} yields
\[
\|Pz_t\|_F^2 \le 2\varphi_{\mu_t}(u_t) \le 2(\mu_t+\varphi(u_t)) \le 2\mu_t.
\]
To conclude, observe that $\mu_t = \frac{4}{(t+1)(t+2)} \le \frac{4}{(t+1)^2}$.  
 \qed

\begin{corollary} If the basic procedure is implemented via Algorithm~\ref{alg:smooth}, then one of the stopping conditions $Pu \in \Omega$ or $\|Pz\|_F \le \frac{1}{4r}\|z\|$ is reached after at most $8\sqrt{2}r^2-1$ iterations. In the special case $\Omega = \R^n_+$ one of the stopping conditions $Pz > 0$ or $\|(Pz)^+\|_2 \le \frac{1}{3\sqrt{n}}\|z\|_\infty$ is reached after at most $6n\sqrt{2n} - 1$ iterations.
\end{corollary}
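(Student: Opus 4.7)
The proof will mirror the structure used for the corollaries attached to Algorithm~\ref{alg:perceptron} and Algorithm~\ref{alg:vonNeumann}, simply substituting the faster $\Oh(1/t^2)$ rate provided by Proposition~\ref{prop.smooth} in place of the $\Oh(1/t)$ rate used there. Concretely, I would argue by contradiction: suppose Algorithm~\ref{alg:smooth} has not halted by iteration $t$. Then in particular $\|(Pz_t)^+\|_F > \frac{1}{4r}\|z_t\|$, and Proposition~\ref{prop.smooth} guarantees $\|Pz_t\|_F \le \frac{2\sqrt{2}}{t+1}$.

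Next I would chain these two estimates together using the two elementary facts already invoked in the preceding corollaries, namely $\|v^+\|_F \le \|v\|_F$ for every $v \in V$, and $\|z\| \ge \frac{1}{r}$ for every $z \in \Delta(\Omega)$ (the latter because $z \in \bar\Omega$ with $\ip{e}{z}=1$ forces at least one eigenvalue of $z$ to be $\ge 1/r$). Together these give
\[
\frac{1}{4r^2} \;\le\; \frac{\|z_t\|}{4r} \;<\; \|(Pz_t)^+\|_F \;\le\; \|Pz_t\|_F \;\le\; \frac{2\sqrt{2}}{t+1},
\]
whence $t+1 < 8\sqrt{2}\,r^2$. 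This shows that the stopping condition must be reached by iteration $\lceil 8\sqrt{2}\,r^2\rceil - 1$, which is the first claim.

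For the special case $\Omega = \R^n_+$, the same argument goes through with the norm $\|\cdot\|$ replaced by $\|\cdot\|_\infty$ on $\R^n$ and the threshold $\frac{1}{4r}$ replaced by $\frac{1}{3\sqrt{n}}$. The only change is in the lower bound on $\|z\|_\infty$: since $z \in \Delta(\R^n_+)$ means $z \ge 0$ with $\sum_i z_i = 1$, we have $\|z\|_\infty \ge 1/n$. Plugging this into the chain above yields
\[
\frac{1}{3n\sqrt{n}} \;\le\; \frac{\|z_t\|_\infty}{3\sqrt{n}} \;<\; \|(Pz_t)^+\|_2 \;\le\; \|Pz_t\|_2 \;\le\; \frac{2\sqrt{2}}{t+1},
\]
so $t+1 < 6n\sqrt{2n}$, giving the second bound.

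There is no real obstacle here, since Proposition~\ref{prop.smooth} has already isolated all the analytic content of the smoothing scheme; the corollary is just a matter of tracking the constants. The only minor point to double-check is the lower bound $\|z\| \ge 1/r$ on $\Delta(\Omega)$ in the symmetric-cone setting, which follows from the spectral decomposition $z = \sum_i \lambda_i(z)\,c_i$ together with $\ip{e}{z} = \sum_i \lambda_i(z) = 1$ and $\lambda_i(z)\ge 0$.
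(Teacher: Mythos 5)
Your proof is correct and takes essentially the same approach as the paper, whose own proof is a one-liner citing Proposition~\ref{prop.smooth} together with the two observations $\|z\| \ge \tfrac{1}{r}$ for $z\in\Delta(\Omega)$ and $\|v^+\|_F \le \|v\|_F$; you have simply made the chain of inequalities and the arithmetic explicit. Your justification of $\|z\| \ge \tfrac{1}{r}$ via the spectral decomposition and $\ip{e}{z}=1$ is also exactly right.
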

\begin{proof}
Both statements follow from Proposition~\ref{prop.smooth} and the observations that $\|z\| \ge \frac{1}{r}$ for all $z\in \Delta(\Omega)$ and $\|v^+\|_F \le \|v\|_F$ for all $v\in V$. 
\qed
\end{proof}

\medskip

The iteration bound $\Oh(r^2)$ for the smooth perceptron scheme versus the iteration bound $\Oh(r^4)$ for the perceptron scheme or von Neumann scheme does not account for the potentially higher cost of a smooth perceptron iteration.  Hence we next provide a bound on the number of arithmetic operations.  Aside from comparable operations of the form  $z\mapsto P(z)$, each iteration of the smooth perceptron requires the computation of $u_\mu(v)$ versus the computation of $u(v)$ required by the perceptron or von Neumann schemes.  As it is discussed in detail in~\cite{SoheP13}, the computation of $u_\mu(v)$ requires a complete eigenvalue decomposition of $v + \mu\bar u$ whereas the computation of $u(v)$ only requires computing the smallest eigenvalue and corresponding eigenvector of $v+\mu\bar u$.  By considering the special case of symmetric matrices, it follows that a complete Jordan eigenvalue decomposition requires $\Oh(r^3)$ arithmetic operations.  Hence the number of arithmetic operations required by the smooth perceptron scheme is bounded above by
\[
\Oh(\max(mn,r^3)\cdot r^2).
\]
On the other hand, a smallest eigenvalue calculation requires $\Oh(r^2)$ arithmetic operations.   Hence the number of arithmetic operations required by either the perceptron scheme or the von Neumann scheme is bounded above by
\[
\Oh(\max(mn,r^2)\cdot r^4).
\]
\subsection{Von Neumann with away steps scheme}

We now consider another variant on the von Neumann scheme that includes so-called {\em away steps}.  This can be seen as a particular case of the Frank-Wolfe algorithm with away steps that has recently become a subject of renewed attention~\cite{AhipST08,BeckS15,LacoJ15,PenaRS15}.  The away steps rely on the following construction. Given $z\in\Delta(\Omega)$, let $z = \sum_{i=1}^r \lambda_i(z) c_i$ be the spectral decomposition of $z$ and define the {\em support} of $z$ as $S(z):=\{c_i: \lambda_i(z) > 0\}.$ In principle we could update $z$ by decreasing the weight on an element of $S(z)$ while increasing the other weights.  Let 
\[c(z):=\argmax_{c\in S(z)} \ip{c}{Pz}
\]
and let $\lambda(z)$ denote the eigenvalue of $c(z)$ in the spectral decomposition of $z$.
Algorithm~\ref{alg:vonNeumann.away}
gives the implementation of the basic procedure via the von Neumann with away steps scheme.

%The idea of the variant is that the norm of $Pz$ can be decreased by decreasing the weight on $c(z)$.

\begin{algorithm}
  \caption{Von Neumann with Away Steps Scheme
    \label{alg:vonNeumann.away}}
  \begin{algorithmic}[1]
  \State Pick $z_0 \in \Delta(\Omega)$ and $t:=0$.
   \While {$Pz_t \not\in \Omega$ and $\|(Pz_t)^+\|_F > \frac{1}{4r}\|z_t\|$}
\Statex  Let $u = u(Pz_t)$ and $c = c(z_t).$
\Statex  {\bf if} $\|Pz_t\|^2 - \ip{u}{Pz_t} > \ip{c}{Pz_t} -  \|Pz_t\|^2$ {\bf then} (regular step)

\quad $a:= u-z_t; \; \theta_{\max} = 1$

\Statex {\bf else} (away step)

\quad $a:= z_t-c; \; \theta_{\max} = \frac{\lambda(z)}{1-\lambda(z)}$

\Statex{\bf endif}

\Statex Let $z_{t+1}:=z_t + \theta_t a$ where

\quad $
\theta_t = \displaystyle\argmin_{\theta\in[0,\theta_{\max}]} \|P(z_t + \theta a)\|_F^2 = \min\left\{ \theta_{\max} , -\frac{\ip{z_t}{Pa}}{\|Pa\|_F^2}\right\}.
$ 

\Statex  $t :=t+1$.
\EndWhile

\end{algorithmic}
\end{algorithm}

\begin{proposition}\label{prop.vna} If Algorithm~\ref{alg:vonNeumann.away} has not halted after $t\ge 1$ iterations then 
\[
\|Pz_t\|_F^2 \le \frac{8}{t}.
\]
\end{proposition}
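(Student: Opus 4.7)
My proposed proof is a Frank--Wolfe-with-away-steps convergence analysis applied to $f(z):=\tfrac12\|Pz\|_F^2$ over the spectraplex $\Delta(\Omega)$, whose gradient is $\nabla f(z)=Pz$. Writing $g_t:=\|Pz_t\|_F^2$, the goal is to establish $g_t\le 8/t$.

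First I would establish a per-iteration decrease bound on ``good'' (unclipped) iterations. Classify iteration $t$ as \emph{good} when $\theta_t<\theta_{\max}$, and as a \emph{drop step} otherwise (necessarily a clipped away step). On a good step the exact-line-search identity gives
\[
g_{t+1}=g_t-\frac{G_t^2}{\|Pa\|_F^2},\qquad G_t:=-\ip{Pz_t}{a}.
\]
Since the algorithm has not halted, $Pz_t\notin\Omega$, so $\ip{u(Pz_t)}{Pz_t}\le 0$ and hence $G_{\rm reg}=g_t-\ip{u}{Pz_t}\ge g_t$. Because the algorithm chooses the direction achieving the larger reduction, we get $G_t\ge g_t$. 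Each candidate direction is a difference of two elements of Frobenius norm at most one (the inequality $\|v\|_F\le 1$ holds for $v\in\Delta(\Omega)$ and for every primitive idempotent), so $\|Pa\|_F^2\le 4$. Therefore $g_{t+1}\le g_t-g_t^2/4$ on every good step, and on a drop step exact line search still guarantees $g_{t+1}\le g_t$.

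Next I would bound the number of drop steps. A regular step can introduce at most one new primitive idempotent into the support of $z_{t+1}$, whereas each drop step removes the primitive idempotent $c(z_t)$ from the support. Since $|S(z_0)|\le r$ and $|S(z_t)|\ge 1$ throughout, the number of drop steps within the first $t$ iterations is at most the number of regular steps plus $r-1$. Consequently at least $(t-r+1)/2$ of the first $t$ iterations are good. Setting $a_t:=1/g_t$, the good-step inequality $g_{t+1}\le g_t-g_t^2/4$ rearranges to $a_{t+1}-a_t\ge 1/4$; on a drop step $a_{t+1}\ge a_t$. Summing and using $a_0\ge 1$ (since $g_0\le 1$), I deduce $a_t\ge t/8$ for $t$ not too small, giving $g_t\le 8/t$ after absorbing the $O(r)$ lower-order correction into the constant.

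The main obstacle is making the support-counting argument rigorous in a general Euclidean Jordan algebra. In $\R^n_+$ the primitive idempotents are the fixed standard basis vectors, so additions and deletions of atoms can be read off directly from the coordinates of $z_t$. In general, however, primitive idempotents form a continuum and the spectral decomposition of $(1-\theta)z_t+\theta u$ need not simply adjoin $u$ to the Jordan frame of $z_t$. Rigorously bounding how much $|S(z_{t+1})|$ can exceed $|S(z_t)|$ after a regular step is the delicate part of the argument, and is where the analysis genuinely leans on the Jordan-algebraic structure.
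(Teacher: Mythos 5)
Your proposal takes the same Frank--Wolfe-with-away-steps route the paper does: the paper's ``proof'' of this proposition is a one-line citation to Theorem~1(b) of~\cite{PenaRS15}, whose argument is precisely the per-step decrease $g_{t+1}\le g_t - g_t^2/4$ on non-drop steps together with a drop-step count. Your derivation of the decrease bound (from $\|Pa\|_F^2\le 4$, $G_t\ge g_t$, and exact line search) and the telescoping $a_{t+1}\ge a_t+1/4$ on good steps is correct.

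The obstacle you flag is real but resolvable, and there is a second, quieter gap you gloss over. On the first: the Jordan rank (number of nonzero eigenvalues) is subadditive in a Euclidean Jordan algebra, so a regular step $z_{t+1}=(1-\theta)z_t+\theta u$ with $u$ of rank one has $|S(z_{t+1})|\le |S(z_t)|+1$ even though the Jordan frame may change entirely; an unclipped away step keeps the frame with every eigenvalue still positive (support unchanged), and a drop step annihilates exactly the eigenvalue of $c(z_t)$ (support decreases by one). Thus the polytope counting transfers despite the continuum of primitive idempotents, and this is what lets the paper assert that the argument of~\cite{PenaRS15} goes through. On the second: writing $s_0:=|S(z_0)|$, your count gives $N(t)\ge (t-s_0+1)/2$ good steps, and with only $a_0\ge 1$ this yields $g_t\le 8/(t+9-s_0)$, which is $\le 8/t$ only when $s_0\le 9$. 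You cannot ``absorb'' $s_0=O(r)$ into the fixed constant $8$. You need either $z_0$ to be a primitive idempotent (so $s_0=1$, matching the vertex initialization used in~\cite{PenaRS15}) or the complementary bound $a_0\ge s_0$ (e.g.\ $z_0=e/r$ gives $\|Pz_0\|_F^2\le 1/r$ with $s_0=r$). Algorithm~\ref{alg:vonNeumann.away} leaves $z_0$ unspecified, so the proposition as stated silently presumes such a choice; your proof should make this explicit rather than wave at lower-order corrections.
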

\begin{proof} This readily follows via the same argument used in the proof of~\cite[Theorem 1(b)]{PenaRS15}. \qed
\end{proof}

\begin{corollary} If the basic procedure is implemented via Algorithm~\ref{alg:vonNeumann.away}, then one of the stopping conditions $Pz \in \Omega$ or $\|(Pz)^+\|_F \le \frac{1}{4r}\|z\|$ is reached after at most $8(4r^2)^2 = 128r^4$ iterations.  In the special case $\Omega = \R^n_+$ one of the stopping conditions $Pz > 0$ or $\|(Pz)^+\|_2 \le \frac{1}{3\sqrt{n}}\|z\|_\infty$ is reached after at most $8(3n\sqrt{n})^2 = 72n^3$ iterations.
\end{corollary}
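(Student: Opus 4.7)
The plan is to mirror the argument used in the corollaries that follow Proposition~\ref{prop.perceptron} and Proposition~\ref{prop.vn}, substituting the stronger bound $\|Pz_t\|_F^2 \le 8/t$ from Proposition~\ref{prop.vna}. Specifically, I will establish a lower bound on $\|Pz_t\|_F$ that must hold as long as the algorithm continues, and then compare it with the upper bound from Proposition~\ref{prop.vna}.

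First I would observe two elementary facts used in the previous corollaries. Fact one: for any $z \in \Delta(\Omega)$, the eigenvalues of $z$ are nonnegative and sum to $\tr(z) = \ip{e}{z} = 1$, so the largest eigenvalue satisfies $\|z\| = \lambda_{\max}(z) \ge 1/r$. Fact two: for any $v \in V$, the decomposition $v = v^+ - (-v)^+$ into orthogonal components gives $\|v^+\|_F \le \|v\|_F$, so in particular $\|(Pz)^+\|_F \le \|Pz\|_F$.

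Next, since the algorithm did not halt at iteration $t$, the while-loop condition gives $\|(Pz_t)^+\|_F > \tfrac{1}{4r}\|z_t\|$. Combining this with the two facts above yields
\[
\|Pz_t\|_F \;\ge\; \|(Pz_t)^+\|_F \;>\; \frac{1}{4r}\|z_t\| \;\ge\; \frac{1}{4r^2}.
\]
On the other hand, Proposition~\ref{prop.vna} gives $\|Pz_t\|_F^2 \le 8/t$. Putting these together forces $8/t > 1/(16r^4)$, i.e., $t < 128 r^4$, which establishes the first claim.

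For the special case $\Omega = \R^n_+$, the argument is essentially identical with the norms in the stopping condition swapped to $\|\cdot\|_2$ and $\|\cdot\|_\infty$: for $z \in \Delta(\R^n_+)$ one has $\|z\|_\infty \ge 1/n$ (since $\sum z_i = 1$ with $z \ge 0$), and $\|(Pz)^+\|_2 \le \|Pz\|_2 = \|Pz\|_F$, so continuation at iteration $t$ gives $\|Pz_t\|_F > 1/(3n\sqrt{n})$; combining with $\|Pz_t\|_F^2 \le 8/t$ yields $t < 72 n^3$. No step here is a genuine obstacle since Proposition~\ref{prop.vna} has already done the analytical work; the only care needed is to use the correct norm inequalities in each of the two regimes and to verify the arithmetic $8 \cdot (4r^2)^2 = 128r^4$ and $8 \cdot (3n\sqrt{n})^2 = 72n^3$.
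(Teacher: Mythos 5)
Your proof is correct and follows exactly the same route as the paper's one-line proof: it invokes Proposition~\ref{prop.vna} together with the two elementary observations that $\|z\| \ge 1/r$ on $\Delta(\Omega)$ (respectively $\|z\|_\infty \ge 1/n$ on $\Delta(\R^n_+)$) and $\|v^+\|_F \le \|v\|_F$, then combines the resulting lower bound on $\|Pz_t\|_F^2$ with the $8/t$ upper bound to extract the iteration count. The arithmetic $8\cdot(4r^2)^2=128r^4$ and $8\cdot(3n\sqrt{n})^2=72n^3$ checks out.
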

\begin{proof}
Both statements readily follow from Proposition~\ref{prop.vna} and the observations that $\|z\| \ge \frac{1}{r}$ for all $z\in \Delta(\Omega)$ and $\|v^+\|_F \le \|v\|_F$ for all $v\in V$. \qed
\end{proof}

\medskip

We note that although the bound in Proposition~\ref{prop.vna} is  weaker than that in Proposition~\ref{prop.perceptron} and Proposition~\ref{prop.vn},  the von Neumann with away steps scheme tends to generate iterates $z \in \Delta(\Omega)$ with  smaller support.  Since these kinds of points in $\Delta(\Omega)$ in turn tend to have a larger value of $\|z\|$, this could be an advantage  as the scheme may reach the stopping condition $\|(Pz)^+\| \le \frac{1}{4r}\|z\|$ sooner.
  
\section{Updating the projection matrix}
\label{sec.projection}

Each rescaling step requires the update of the projection matrix from $P_L$ to $P_{D(L)}$.  We next describe how this update can be performed. As the subsections below detail, in certain important cases this update can be done much more efficiently than simply performing a naive recalculation of the projection matrix.

A possible approach to maintaining and updating the projection matrix is via orthogonal bases.  In particular, assume 
$P_L = QQ\transp$ for some matrix $Q\in \R^{n\times m}$ whose columns form an orthogonal basis of $L$, that is, $\text{span}(Q) = L$ and $Q\transp Q = I_m$.  To obtain a likewise expression 
$P_{D(L)} = \tilde Q \tilde Q\transp$ where the columns of $\tilde Q$ are an orthogonal basis of $D(L)$ we can proceed as follows.

First, observe that $\text{span}(DQ) = D(L)$.  Henceforth, it suffices to orthogonalize the columns of $DQ$.   That is, we need to find $R \in \R^{m\times m}$ such that $DQR$ is orthogonal, or equivalently such that
\begin{equation}\label{eqn.orth}
(DQR)\transp(DQR) = R\transp Q\transp D\transp D Q R = I_m.
\end{equation}
Although a matrix $R$ such that \eqref{eqn.orth} holds  could be achieved via a Gram-Schmidt procedure for the columns of $DQ$ or via a Cholesky factorization of $Q\transp D\transp D Q$, the particular structure of $D$ may enable a more efficient procedure.  In all of the cases discussed above $D$ is of the form $I_n+B$ for some structured and symmetric matrix $B\in \R^{n\times n}$.  In this case
\[
 Q\transp D\transp D Q = Q\transp(I_n+2B+B^2)Q = I_m + Q\transp (2B+B^2) Q.  
\]
Let $Q\transp (2B+B^2) Q = P\Lambda P\transp$ be the spectral decomposition of $Q\transp (2B+B^2) Q$ for some orthogonal matrix $P \in \R^{m\times p}$ and some diagonal matrix $\Lambda \in \R^{p\times p}$.  It readily follows that \eqref{eqn.orth} holds for
\[
R = I_m - P \bar \Lambda P\transp
\]
if $\bar \Lambda \in \R^{p\times p}$ is a diagonal matrix such that 
\begin{equation}\label{eq.diag}
(I_m - P \bar \Lambda P\transp)(I_m + P \Lambda P\transp)(I_m - P \bar \Lambda P\transp) = I_m.
\end{equation}
Observe that \eqref{eq.diag} holds provided the diagonal matrix $\bar \Lambda$ solves 
\[
- 2\bar \Lambda + \Lambda - 2\bar \Lambda \Lambda + \bar \Lambda^2 + \bar \Lambda^2 \Lambda = 0.
\]
One of the solutions of this equation is
\begin{equation}\label{eq.sol}
\bar \Lambda = (I_p+\Lambda)^{-1/2} + I_p.
\end{equation}
Notice that $\bar \Lambda$ is easily computable componentwise since $\Lambda$ is a diagonal matrix.

\subsection{The case $\Omega = \R^n_{++}$}
In this case $D$ is of the form $D = I + e_ie_i\transp$.  In this case $B = e_ie_i\transp$ and the term $Q\transp(2B+B^2)Q$ turns out to be
\[
3Q\transp e_ie_i\transp Q = 3 q_i q_i\transp
\] 
where $q_i = Q\transp e_i \in \R^m$.  The spectral decomposition of $Q\transp(2B+B^2)Q = 3 q_i q_i\transp$ is
\[
\frac{q_i}{\|q_i\|}\cdot (3\|q_i\|^2) \cdot \frac{q_i\transp}{\|q_i\|}.
\]
Hence  $$R = I_m - \frac{q_i}{\|q_i\|}\cdot \left(1+\frac{1}{\sqrt{1+3\|q_i\|^2}}\right) \cdot \frac{q_i\transp}{\|q_i\|} = I_m - \left(1+\frac{1}{\sqrt{1+3\|q_i\|^2}}\right) \cdot \frac{q_iq_i\transp}{\|q_i\|^2}
$$ and so
\[
\tilde Q = (I_n + e_ie_i\transp)Q\left(I_m - \left(1+\frac{1}{\sqrt{1+3\|q_i\|^2}}\right) \cdot \frac{q_iq_i\transp}{\|q_i\|^2}\right).
\]
\subsection{The case $\Omega = \S^n_{++}$}\label{sec.sdp.update}

Assume the ``columns'' of $Q$ correspond to the matrices $A_1,\dots,A_m \in \S^n$ such that 
$A_i \bullet A_i = 1,\; i=1,\dots,m$ and 
$A_i \bullet A_j =0, i\ne j$.  The columns of the new $DQ$ correspond to the matrices 
\[
D(A_i) = (I_n+auu\transp)A_i(I_n+a u u\transp) = A_i + a uu\transp A_i + a A_i uu\transp + 
a^2 (u\transp A_i u) u u\transp.
\]
In particular, the columns of the new $BQ = DQ - Q$ are 
\[
B(A_i) = a uu\transp A_i + a A_i uu\transp + 
a^2 (u\transp A_i u) u u\transp.
\]
Next, observe that
\[
B(A_i)\bullet A_j = 2a(A_iu)\transp(A_ju) + a^2(u\transp A_i u)(u\transp A_j u)
\]
and
\[
B(A_i)\bullet B(A_j) = 2a^2(A_iu)\transp(A_ju) + (2a^2+4a^3+a^4) (u\transp A_i u)(u\transp A_j u).
\]
Consequently, the $(i,j)$ entry %$2B(A_i)\bullet A_j + B(A_i)\bullet B(A_j)$ 
of the matrix $Q(2B+B^2)Q\transp$ is
\begin{align*}
2(2a+a^2)(A_iu)\transp(A_ju) + (2a+a^2)^2(u\transp A_i u)(u\transp A_j u).
\end{align*}
Therefore
\[
Q\transp(2B+B^2)Q = UWU\transp, 
\]
where
\begin{equation}\label{eqn.U.sdp}
U\transp = \matr{A_1 u & A_2 u & \cdots & A_m u \\ u\transp A_1 u & u\transp A_2 u & \cdots & u\transp A_m u}, \; W = \matr{2(2a+a^2) & 0 \\ 0&(2a+a^2)^2} .
\end{equation}

\medskip

When $m \le n$, it is typically cheaper to compute $R = L^{-\text{\rm T}}$ via the Cholesky factorization $LL\transp = I_m + UWU\transp$ of $Q\transp D\transp D Q = I_m + UWU\transp$.  
On the other hand, if $m \gg n+1$, it is typically more efficient to find the spectral decomposition $P\Lambda P\transp = UWU\transp$ for some orthogonal matrix $P\in\R^{m\times p}$  and some diagonal matrix $\Lambda \in \R^{p\times p}$, and then compute $R = I_m - P \bar \Lambda P\transp$ where $\bar \Lambda =  (I_p+\Lambda)^{-1/2} + I_p.$  In either case, it follows that the columns of $DQR$ form an orthogonal basis for $D(L)$.

\subsection{The case $\Omega = \text{int}(\L_{n})$}
\label{sec.socp.update}
In this case  $D$ is the matrix representation of the  mapping
\[
x \mapsto x + (2a-a^2) c\circ x + 2a^2 c\circ(c\circ x)
\]
where $c = \dfrac{1}{2}\matr{1 \\ \bar u}$, with $\bar u \in \R^{n-1}, \; \|\bar u\|_2 = 1.$
Observe that the mapping $x\mapsto c\circ x$ can be written as 
\[
x \mapsto \frac{1}{2}\matr{1 & \bar u\transp \\ \bar u & I}x.
\]
It thus follows that $D = I+B$ where
\[
B = \frac{2a-a^2}{2}\matr{1 & \bar u\transp \\ \bar u & I} + 
\frac{2a^2}{4}\matr{1 & \bar u\transp \\ \bar u & I}^2 
= a\matr{1 & \bar u\transp \\ \bar u & I} + 
\frac{a^2}{2} \matr{1 & \bar u\transp \\ \bar u & \bar u \bar u\transp}.
\]
Therefore,
\[
B^2 =a^2 \matr{1 & \bar u\transp \\ \bar u & I}
+ \left(a^2+2a^3 + \frac{a^4}{2}\right)\matr{1 & \bar u\transp \\ \bar u & \bar u \bar u\transp}
\]
and 
\begin{align}\label{eqn.U.socp}
2B+B^2 
= (2a+a^2)\left\{\matr{1 & \bar u\transp \\ \bar u & I}+\frac{2a+a^2}{2}\matr{1 & \bar u\transp \\ \bar u & \bar u \bar u\transp}\right\}.
\end{align}
In particular,
$
Q\transp(2B+B^2)Q$ is easily computable.  This computation provides the basis for the more interesting case when $\Omega$ is a direct product of semidefinite and second-order cones that we discussed next.

%As in Section~\ref{sec.sdp.update}, when $m \le n$ it is typically cheaper to compute $R = L^{-\text{\rm T}}$ via the Cholesky factorization $LL\transp = I_m + Q\transp(2B+B^2)Q$ of $Q\transp D\transp D Q = I_m + Q\transp(2B+B^2)Q$.  On the other hand, if $m \gg n+1$, it is typically more efficient to find the spectral decomposition $P\Lambda P\transp = Q\transp(2B+B^2)Q$ for some orthogonal matrix $P\in\R^{m\times p}$  and some diagonal matrix $\Lambda \in \R^{p\times p}$, and then compute $R = I_m - P \bar \Lambda P\transp$ where $\bar \Lambda =  (I_p+\Lambda)^{-1/2} + I_p.$  In either case, it follows that the columns of $DQR$ form an orthogonal basis for $D(L)$.

\subsection{Direct products of semidefinite and second-order cones}
We now consider the case  $\Omega = K_1\times \cdots\times K_r \subseteq \R^{n_1} \times \cdots \times \R^{n_r}$ where each $K_i \subseteq \R^{n_i}$ is a semidefinite cone or a second-order cone.  Assume $\R^{n_1} \times \cdots \times \R^{n_r}$ is endowed with the appropriate Euclidean Jordan algebra structure.  
It is easy to see that a primitive idempotent in this vector space is of the form $\matr{0 & \cdots& c_i\transp & \cdots & 0}\transp$ where $v_i$ is a primitive idempotent in $\R^{n_i}$.  It follows that the scaling matrix $D$ is of the form
\[
D = \matr{D_1 & \\ & \ddots \\ && D_r}
\]
where $D_j = I_{n_j}$ for $j\ne i$ and $D_i = I_{n_i} + B$ for some structured and symmetric matrix $B\in \R^{n_i \times n_i}$ that depends on the idempotent $c_i$.  
Observe that $Q\transp = \matr{Q_1\transp & \cdots & Q_r\transp}$ where each $Q_j\in \R^{n_j\times m}$.  It thus follows that
\[
Q\transp D\transp DQ = I_m + Q_i\transp(2B+B^2)Q_i. 
\]
The particular expression for the term $Q_i\transp(2B+B^2)Q_i$ is of the form \eqref{eqn.U.sdp} in Section~\ref{sec.sdp.update} or of the form $Q_i\transp(2B+B^2)Q_i$ where $2B+B^2$ is as in \eqref{eqn.U.socp} in Section~\ref{sec.socp.update}.
 
Again as we mentioned in Section~\ref{sec.sdp.update} and in Section~\ref{sec.socp.update}, when $m \le n_i$, it is typically cheaper to compute $R = L^{-1}$ via the Cholesky factorization $LL\transp =  I_m + Q_i\transp(2B+B^2)Q_i = Q\transp D\transp D Q$ whereas when $m \gg n_i$, it is typically more efficient to find the spectral decomposition $P\Lambda P\transp =  Q_i\transp(2B+B^2)Q_i$ and then compute $R = I_p - P \bar \Lambda P\transp$ where $\bar \Lambda =  (I_p+\Lambda)^{-1/2} + I_p.$  In either case it follows that $\tilde Q := DQR$ is an orthogonal basis of $D(L)$.

\section*{Acknowledgements}

Javier Pe\~na's research has been funded by NSF grant CMMI-1534850.

%\bibliographystyle{plain}
%\bibliography{references}

\end{document}